\newtheorem{theorem}{Theorem}
\newtheorem{corollary}{Corollary}
\newtheorem{lemma}{Lemma}
\newtheorem{definition}{Definition}
\begin{document}

\large

\author{\large Ilnur I. Batyrshin\thanks{%
batyrshin@gmail.com.
} \\
Kazan Federal University, Kazan, Russia}

\title{Countable strict reverse mathematics}
\date{August 19, 2022}
\maketitle

\begin{abstract}
\large
We investigate subsystems $COM_{fcn}$, $COMI_{fcn}$ and $PRA_{fcn}$ of the elementary theory of functions $ETF$, the base theory for countable strict reverse mathematics. We show that inductions on any variable for unary, binary and ternary functions are pairwise equivalent over $COM_{fcn}$. We prove that weakened primitive recursion axiom $WPRA$ is equivalent to primitive recursion axiom $PRA$ over $COMI_{fcn}$. We show that permutation axiom and minimization axioms $MIN^1$, $MIN^2$, $MIN^3$ are pairwise equivalent over $PRA_{fcn}$. Thus, we present several equivalent axiomatizations of $ETF$.
\end{abstract}

\section{Introduction}

\

Reverse mathematics initiated by Harvey Friedman \cite{Fr75}, \cite{Fr76} and shaped by Simpson and his several PhD students is a flourishing branch of contemporary mathematics that needs no introduction. The first and principal book on reverse mathematics by Simpson \cite{Si99}, \cite{Si09} is complemented wonderfully by more recent texts by Hirschfeldt \cite{Hi14}, Stillwell \cite{Sti18} as well as fresh works by Monin and Patey \cite{MoPa} and Dzhafarov and Mummert \cite{DzMu}; numerous papers have demonstrated that reverse mathematics is a fine tool to classify classical mathematical theorems from proof theoretic and epistemological viewpoints.

Strict reverse mathematics is a program in its infancy that seeks to calibrate the strength of mathematical theorems in terms of strictly mathematical axioms, avoiding the need for coding, in contrast to reverse mathematics that uses some purely logical axiom schemes and relies heavily on coding. This program was initiated by Friedman in \cite{Fr76} and \cite{Fr09} and further promoted in his recent preprint \cite{Fr21}, but its systematic development has not yet begun.

The main strict reverse mathematics requirements are that all base theories and target theories be strictly mathematical and that all reversed statements be strictly mathematical and unaltered by coding. These ambitious requirements have the other side of the coin, namely that unlike reverse mathematics there is no single language, no single formal system, no single way of doing things in strict reverse mathematics. While the setting for reverse mathematics is the language of second order arithmetic $L_2$, there are several fundamentally different formal systems in strict reverse mathematics even by now. In \cite{Fr76} the four-sorted language with numbers and functions of arities 1, 2, 3 was introduced. In \cite{Fr09} several logical systems were considered, one of them with two sorts, corresponding to the integers and the finite sequences of integers. In \cite{Fr21} the eight-sorted language was used to construe reverse mathematics as the special case of strict reverse mathematics. ``It is a feature of strict reverse mathematics, not a bug'', Friedman commented on this situation (personal communication). In strict reverse mathematics for each area of mathematics there should be a special language and a special formal system with basic concepts of this area as primitives.

Strict reverse mathematics can be subdivided into two major areas: countable strict reverse mathematics and finite strict reverse mathematics \cite{Fr22}. In the former the objects referred to in the base theory are countable, in the latter the objects referred to in the base theory are finite.

In this paper we study the formal system of elementary theory of functions $ETF$ introduced and taken as the base theory for countable strict reverse mathematics in \cite{Fr21}.

The language of elementary theory of functions $L_{fcn}$ is a four-sorted language that has four distinct sorts of variables which are intended to range over four different kinds of object. Variables of the first sort are number variables, are denoted by $i, j, k, m, n, ...$, and are intended to range over the set $\omega=\{0, 1, 2, ...\}$ of all natural numbers. Variables of the second sort are functions variables of arity 1, are denoted by $f_0^1, f_1^1, f_2^1...$, and are intended to range over all unary functions on $\omega$. Variables of the third sort are functions variables of arity 2, are denoted by $f_0^2, f_1^2, f_2^2...$, and are intended to range over all binary functions on $\omega$. Variables of the fourth sort are functions variables of arity 3, are denoted by $f_0^3, f_1^3, f_2^3...$, and are intended to range over all ternary functions on $\omega$. The language $L_{fcn}$ has also the constant symbol $0$ of sort $\omega$ and the unary function symbol $S$ of the second sort which is intended to denote the unary successor function.

The terms of $L_{fcn}$ are built up in the usual way, namely they are number variables, the constant symbol $0$, and $S(t)$, $f_i^1(t)$, $f_j^2(t,q)$, $f_k^3(t,q,r)$ whenever $t,q,r$ are terms and $f_i^1, f_j^2, f_k^3$ are functions variables of corresponding arity. Atomic formulas are $s=t$ where $s$ and $t$ are terms. Formulas are built up as usual from atomic formulas by means of propositional connectives $\wedge, \vee, \neg, \rightarrow, \leftrightarrow$ and quantifiers $\forall,\exists$ over numbers and functions.

\begin{definition} [Friedman \cite{Fr21}] $L_{fcn}$ is defined to be the language of elementary theory of functions as described above.
\end{definition}

For convenience, we will omit indices and use $f,g,h,f',g',h'$ as function variables when their arities are clear from the context. We also adopt the uniqueness quantifier $\exists!$ in a usual way and shall use parentheses to indicate grouping as is customary in mathematical logic textbooks.

The language $L_{fcn}$ allows to present the strictly mathematical system in numbers and functions ETF that turns out to be logically equivalent to the well-known in reverse mathematics system $RCA_0$ when $RCA_0$ is put into $L_{fcn}$ in the obvious way \cite{Fr21}.

\begin{definition} [Friedman \cite{Fr21}]\label{ETF} The axioms of elementary theory of functions (ETF) consist of the following $L_{fcn}$-formulas:
\begin{tabbing}
\hspace{1em} \= (1) Successor axioms: \\

\hspace{2em} \= i. $S(n)\not=0$ \\

\hspace{2em} \= ii. $S(n)=S(m)\rightarrow n=m$ \\

\hspace{2em} \= iii. $n\not=0\rightarrow(\exists m)(S(m)=n)$ \\

\hspace{1em} \= (2) Initial functions axioms: \\

\hspace{2em} \= i. $(\exists f)(\forall m)(f(m)=n)$ (constant unary functions)\\

\hspace{2em} \= ii. $(\exists f)(\forall m,n,r)(f(m,n,r)=m)$ (projection function)\\

\hspace{2em} \= iii. $(\exists f)(\forall m,n,r)(f(m,n,r)=n)$ (projection function)\\

\hspace{2em} \= iv. $(\exists f)(\forall m,n,r)(f(m,n,r)=r)$ (projection function)\\

\hspace{2em} \= v. $(\exists f)(\forall n)(f(n)=S(n))$ ($S(n)$ defines a unary function) \\

\hspace{1em} \= (3) Composition axioms: \\

\hspace{2em} \= i. $(\exists f)(\forall m,n,r)(f(m,n,r)=g(m,n))$ \\

\hspace{2em} \= ii. $(\exists f)(\forall m,n,r)(f(m,n,r)=g(m))$ \\

\hspace{2em} \= iii. $(\exists f)(\forall m,n)(f(m,n)=g(m,n,r))$ \\

\hspace{2em} \= iv. $(\exists f)(\forall n)(f(m)=g(m,n,r))$ \\

\hspace{2em} \= v. $(\exists f)(\forall m,n,r)(f(m,n,r)=g(h_1(m,n,r), h_2(m,n,r), h_3(m,n,r)))$ \\

\hspace{1em} \= (4) Primitive recursion axioms ($PRA$): \\

\hspace{2em} \= $(\exists f)(\forall m)(f(m,0)=g(m)\&(\forall n)(f(m,S(n))=h(m,n,f(m,n))))$ \\

\hspace{1em} \= (5) Permutation axiom (PERM): \\

\hspace{2em} \= $(\forall n)(\exists!m)(f(m)=n)\rightarrow(\exists g)(\forall n)(f(g(n))=n)$. \\

\hspace{1em} \= (6) Rudimentary induction axiom: \\

\hspace{2em} \= $f(0)=g(0)\&(\forall n)(f(n)=g(n)\rightarrow f(S(n))=g(S(n)))\rightarrow f(n)=g(n)$.
\end{tabbing}
\end{definition}

In \cite{Fr22} Friedman formulated tasks of studying subsystems of ETF, weakening the primitive recursion axiom and finding reversals to ETF. In the present paper we address these tasks. We focus on studying subsystems of $ETF$. Successor axioms (1) and initial function axioms (2) from Definiton \ref{ETF} are included in all considered subsystems. Adding composition axioms (3) we get the first object of our study that we designated $COM_{fcn}$. Adding induction axiom (6) to $COM_{fcn}$ we get the subsystem designated $COMI_{fcn}$. The subsystem consisting of axioms (1)-(4) and (6) is designated as $PRA_{fcn}$. So, the system $PRA_{fcn}$ is $COMI_{fcn}+PRA$, and the system $ETF$ is $PRA_{fcn}+PERM$.

We study the following statements.

\begin{definition}\label{statements}

\

$WPRA:(\exists f)(\forall m)(f(m,0)=g(m)\&(\forall n)(f(m,S(n))=h(m,S(n),f(m,n))))$

$MIN^1:(\forall m)(\exists! n)(f(m,n)=0)\rightarrow(\exists g)(\forall m)(f(m,g(m))=0)$.

$MIN^2:(\forall m)(\exists n)(f(m,n)=0)\rightarrow(\exists g)(\forall m)(g(m)=(\mu n)(f(m,n)=0))$.

$MIN^3:(\forall m,n)(\exists r)(f(m,n,r)=0)\rightarrow(\exists g)(\forall m,n)(g(m,n)=(\mu r)(f(m,n,r)=0))$.
\end{definition}

We prove that inductions on any variable for unary, binary and ternary functions are pairwise equivalent over $COM_{fcn}$.

We show that weakened primitive recursion axiom $WPRA$ is equivalent to $PRA$ over $COMI_{fcn}$, and hence $PRA_{fcn}$ is equivalent to $COMI_{fcn}+WPRA$.

We also prove that minimization axioms $MIN^1$, $MIN^2$, $MIN^3$ are equivalent to each other and to $PERM$ over $PRA_{fcn}$ (complementing the result of Friedman \cite{Fr21} that $PRA_{fcn}\vdash PERM\rightarrow MIN^1\rightarrow MIN^2\rightarrow MIN^3$), and hence $ETF$ is equivalent to $PRA_{fcn}+MIN^k$ and to $COMI_{fcn}+WPRA+MIN^k$ for $k=1,2,3$.

To make the paper self-contained and to make strict reverse mathematics more accessible to general mathematics community we have included proofs of those lemmas from the unpublished Friedman's preprint \cite{Fr21} that we use.

\section{The system $COMI_{fcn}$}

We start from the subsystem $COM_{fcn}$ of ETF consisting of successor axioms, initial functions axioms and composition axioms. Lemma \ref{lemma1} shows that this subsystem carries the basic ``logic'' content of ETF, while induction, primitive recursion, permutation and other axioms carries additional ``mathematical'' content. In particular, it is the composition axioms that allow to substitute terms and construct new formulas, and this, in fact, is quite evident, since mathematical composition of functions has the same nature as logical substitution of terms (in older books and papers 'composition' of functions was even called 'substitution' of functions, see, e.g., the classical Kleene's book \cite{Kl52}).

This humble subsystem allows to prove equivalence between induction axioms for unary, binary and ternary functions as Lemma \ref{lemma2} shows. However enriching this subsystem with rudimentary induction axiom leads to more powerful subsystem $COMI_{fcn}$ that looks like as a suitable base theory to compare logical power of subclasses of computable and primitive recursive functions (for instance, such as levels of the  Grzegorzcyk hierarchy \cite{Grze}). To demonstrate this we prove that $WPRA$ and $PRA$ are equivalent over $COMI_{fcn}$.

\begin{definition} The axioms of $COMI_{fcn}$ consist of the following list of $L_{fcn}$-formulas, the axioms of $COM_{fcn}$ consist of the formulas (1)-(3) from the list:
\begin{tabbing}
\hspace{1em} \= (1) Successor axioms: \\

\hspace{2em} \= i. $S(n)\not=0$ \\

\hspace{2em} \= ii. $S(n)=S(m)\rightarrow n=m$ \\

\hspace{2em} \= iii. $n\not=0\rightarrow(\exists m)(S(m)=n)$ \\

\hspace{1em} \= (2) Initial functions axioms: \\

\hspace{2em} \= i. $(\exists f)(\forall m)(f(m)=n)$ (constant unary functions)\\

\hspace{2em} \= ii. $(\exists f)(\forall m,n,r)(f(m,n,r)=m)$ (projection function)\\

\hspace{2em} \= iii. $(\exists f)(\forall m,n,r)(f(m,n,r)=n)$ (projection function)\\

\hspace{2em} \= iv. $(\exists f)(\forall m,n,r)(f(m,n,r)=r)$ (projection function)\\

\hspace{2em} \= v. $(\exists f)(\forall n)(f(n)=S(n))$ ($S(n)$ defines a unary function) \\

\hspace{1em} \= (3) Composition axioms: \\

\hspace{2em} \= i. $(\exists f)(\forall m,n,r)(f(m,n,r)=g(m,n))$ \\

\hspace{2em} \= ii. $(\exists f)(\forall m,n,r)(f(m,n,r)=g(m))$ \\

\hspace{2em} \= iii. $(\exists f)(\forall m,n)(f(m,n)=g(m,n,r))$ \\

\hspace{2em} \= iv. $(\exists f)(\forall n)(f(m)=g(m,n,r))$ \\

\hspace{2em} \= v. $(\exists f)(\forall m,n,r)(f(m,n,r)=g(h_1(m,n,r), h_2(m,n,r), h_3(m,n,r)))$ \\

\hspace{1em} \= (6) Rudimentary induction axiom: \\

\hspace{2em} \= $f(0)=g(0)\&(\forall n)(f(n)=g(n)\rightarrow f(S(n))=g(S(n)))\rightarrow f(n)=g(n)$.
\end{tabbing}
\end{definition}

First note, that composition axioms $(i)$-$(ii)$ allow to convert binary and unary functions to ternary functions with dummy arguments, and composition axioms $(iii)$-$(iv)$ allow us to transfer from ternary functions to binary and unary functions. These axioms made it possible to significantly simplify the list of axioms for $COM_{fcn}$. For instance, we don't need axioms for ternary and binary constant functions, since ternary constant functions are obtained from unary constant functions and composition axiom $(ii)$, and binary constant functions are obtained from ternary constant functions and composition axiom $(iii)$. The same is true for binary and unary projection functions that are obtained from ternary projection functions and composition axioms $(iii)$-$(iv)$.

Composition axiom $(v)$ together with projection function axioms allows to change function arguments places. For instance, set $h_1(k,m,n)=m$ and $h_2(k,m,n)=k$ and for any given $g(k,m,n)$ applying composition axiom $(v)$ we have $f(k,m,n)=g(h_1(k,m,n),h_2(k,m,n),n)=g(m,k,n)$.

All of the above is summarized in the following lemma that was proved in \cite{Fr21} for $ETF\backslash PERM$, but in fact is true for $COM_{fcn}$.

\begin{lemma} [Friedman \cite{Fr21}]\label{lemma1} Let $m,n,r$ be distinct variables and $t$ be a term, then the following are provable in $COM_{fcn}$.

(i) $(\exists f)(\forall m,n,r)(f(m,n,r)=t)$

(ii) $(\exists f)(\forall m,n)(f(m,n)=t)$

(iii) $(\exists f)(\forall n)(f(n)=t)$.
\end{lemma}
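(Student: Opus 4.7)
The plan is to prove (i) by structural induction on the term $t$, and then to deduce (ii) and (iii) from (i) by applying the arity-lowering composition axioms (3.iii)--(3.iv).

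For the base cases of (i): if $t$ is one of the three distinguished variables $m$, $n$, $r$, take $F$ to be the appropriate ternary projection supplied directly by axiom (2.ii), (2.iii), or (2.iv). If $t$ is the constant $0$ or a number variable $k$ distinct from $m,n,r$, use axiom (2.i) with the parameter set to $0$ or $k$ to get a unary constant function, and then lift it to a ternary function of the same (constant) value via composition axiom (3.ii).

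For the inductive step, suppose $t$ has the form $S(t_1)$, $f^1(t_1)$, $f^2(t_1,t_2)$, or $f^3(t_1,t_2,t_3)$; by the induction hypothesis for (i), we have ternary $F_1, F_2, F_3$ with $F_j(m,n,r)=t_j$ in the relevant cases. Since composition axiom (3.v) is phrased only for a ternary head applied to three ternary arguments, I will first produce a ternary $g$ from the outer function symbol: in the $f^3$ case take $g := f^3$ directly; in the $f^2$, $f^1$, and $S$ cases use (2.v) (for $S$) together with composition axioms (3.i) or (3.ii) to obtain a ternary $g$ satisfying $g(m,n,r)=f^2(m,n)$, $g(m,n,r)=f^1(m)$, or $g(m,n,r)=S(m)$ respectively. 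Then an application of (3.v) with head $g$ and with the relevant $F_j$'s as the inputs (padding unused input slots with any ternary function already in hand) gives $F$ with $F(m,n,r)=t$, by simply unwinding the definitions.

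Parts (ii) and (iii) then follow quickly: given the ternary $F$ produced by (i), composition axiom (3.iii) yields a binary $f$ with $f(m,n)=F(m,n,r)=t$, establishing (ii); a further application of the arity-lowering axiom (3.iv), possibly combined with (3.v) and the projection axioms to shuffle arguments into the right positions, produces the unary $f$ with $f(n)=t$ required for (iii). The main obstacle is the bookkeeping in the inductive step of (i): one must carefully promote every lower-arity function symbol occurring in $t$ to a ternary function before invoking (3.v), and then select appropriate ``dummy'' ternary inputs for the unused slots. Once this is set up correctly, the verification that $F(m,n,r)=t$ is an unfolding of definitions, and (ii) and (iii) are immediate corollaries.
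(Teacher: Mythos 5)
Your proposal is correct and follows essentially the same route as the paper's proof: structural (external) induction on $t$ for part (i), with the base cases handled by constant and projection functions, the inductive step handled by promoting the outer function symbol to a ternary function via composition axioms (3.i)--(3.ii) (and (2.v) for $S$) before applying (3.v), and parts (ii) and (iii) obtained from (i) via the arity-lowering axioms (3.iii)--(3.iv). The only differences are presentational (your explicit remark about padding unused input slots of (3.v) with dummy ternary functions is a detail the paper leaves implicit).
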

\begin{proof}

\

We prove $(i)$ by external induction on $t$.

If $t$ is $0$ or a variable other than $m,n,r$, then we use ternary constant functions obtained from initial functions axioms $(i)$ and composition axiom $(ii)$. If $t$ is among $m,n,r$, then use projection functions from initial functions axioms $(ii)-(iv)$.

The induction step. If $t$ is $g(s_1,s_2,s_3)$, then by induction hypothesis, let $h_1(m,n,r)=s_1, h_2(m,n,r)=s_2, h_3(m,n,r)=s_3$ hold for all $m,n,r$, so by composition axiom $(v)$ we have $(\exists f)(\forall m,n,r) (f(m,n,r)=g(h_1(m,n,r), h_2(m,n,r), h_3(m,n,r)))$. If $t$ is $g(s_1,s_2)$, then we convert it to a ternary function $g'$ by composition axiom $(i)$ and apply composition axiom $(v)$. If $t$ is $g(s_1)$, then we convert it to a ternary function $g'$ by composition axiom $(ii)$ and apply composition axiom $(v)$. If $t$ is $S(s_1)$, we convert function symbol $S$ to a unary function $g$ by initial functions axiom $(v)$ and apply composition axioms $(ii)$ and $(v)$.

We can derive $(ii)$ and $(iii)$ from $(i)$ and composition axioms $(iii)$ and $(iv)$ respectively.
\end{proof}

The next lemma shows that composition axioms are powerful enough to switch between inductions on any variable for unary, binary and ternary functions.

\begin{lemma} \label{lemma2} The following assertions are pairwise equivalent over $COM_{fcn}$.

(i) $f(0)=g(0)\&(\forall n)(f(n)=g(n)\rightarrow f(S(n))=g(S(n)))\rightarrow f(n)=g(n)$

(ii) $f(m,0)=g(m,0)\&(\forall n)(f(m,n)=g(m,n)\rightarrow f(m,S(n))=g(m,S(n)))\rightarrow f(m,n)=g(m,n)$

(iii) $f(0,n)=g(0,n)\&(\forall m)(f(m,n)=g(m,n)\rightarrow f(S(m),n)=g(S(m),n))\rightarrow f(m,n)=g(m,n)$

(iv) $f(k,m,0)=g(k,m,0)\&(\forall n)(f(k,m,n)=g(k,m,n)\rightarrow f(k,m,S(n))=g(k,m,S(n)))\rightarrow f(k,m,n)=g(k,m,n)$

(v) $f(k,0,n)=g(k,0,n)\&(\forall m)(f(k,m,n)=g(k,m,n)\rightarrow f(k,S(m),n)=g(k,S(m),n))\rightarrow f(k,m,n)=g(k,m,n)$

(vi) $f(0,m,n)=g(0,m,n)\&(\forall k)(f(k,m,n)=g(k,m,n)\rightarrow f(S(k),m,n)=$
$g(S(k),m,n))\rightarrow f(k,m,n)=g(k,m,n)$
\end{lemma}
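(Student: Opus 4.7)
The plan is to prove all six equivalences via (i) as a hub: each of (ii)--(vi) is shown equivalent to (i), from which pairwise equivalence follows immediately. The direction in which any of (ii)--(vi) implies (i) is essentially trivial. Given unary $f, g$ satisfying the hypotheses of (i), Lemma \ref{lemma1} supplies binary or ternary functions $F, G$ that ignore their extra arguments and return $f(n)$ and $g(n)$ on the relevant slot; applying the higher-arity induction in question and specializing the dummy arguments immediately recovers the conclusion $f(n) = g(n)$.

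For the forward direction (i) implies each of (ii)--(vi), the uniform idea is to slice along the designated induction variable using Lemma \ref{lemma1}. To illustrate for (ii): treat $m$ as a free parameter and apply Lemma \ref{lemma1}(iii) to the terms $f(m, n)$ and $g(m, n)$ to obtain unary $F$ and $G$ with $F(n) = f(m, n)$ and $G(n) = g(m, n)$. The hypotheses of (ii) translate directly into $F(0) = G(0)$ and $(\forall n)(F(n) = G(n) \rightarrow F(S(n)) = G(S(n)))$, so (i) yields $F(n) = G(n)$, i.e.\ $f(m, n) = g(m, n)$. For (iii) the same idea applies with the roles of $m$ and $n$ swapped: with $n$ as parameter, Lemma \ref{lemma1}(iii) produces unary slices $F(m) = f(m, n)$ and $G(m) = g(m, n)$, and an application of (i) (after universally generalizing the free numerical variable of (i) and specializing it to $m$) finishes the job. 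The ternary statements (iv)--(vi) are handled identically: fix the two non-induction variables as parameters, use Lemma \ref{lemma1}(iii) to form unary slices of $f$ and $g$ in the remaining variable, and appeal to (i).

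The only point requiring care is to confirm that Lemma \ref{lemma1} genuinely applies to each of the terms arising in the slicing, but this is immediate: every such term is of the form $f(\ldots)$ or $g(\ldots)$ whose arguments are drawn from the induction variable and the fixed parameters, so they are legitimate $L_{fcn}$-terms. Beyond this observation, the ``main obstacle'' -- if one can call it that -- is merely the bookkeeping needed to walk through the five non-unary cases, which is entirely routine given the symmetric pattern above.
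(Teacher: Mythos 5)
Your proposal is correct and follows essentially the same strategy as the paper: use Lemma \ref{lemma1} to form dummy-padded or sliced versions of the given functions so that the hypothesis of one induction scheme transfers to the other, then apply that scheme and read off the conclusion. The paper merely organizes the equivalences differently (it exhibits one representative implication, $(ii)\Rightarrow(vi)$, and notes the rest are analogous, rather than routing everything through $(i)$ as a hub), but the underlying argument is the same.
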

\begin{proof}
Each equivalence is proved in the same way. For example, here is the proof $(ii)\Rightarrow(vi)$. Let we have $f(0,m,n)=g(0,m,n)\&(\forall k)(f(k,m,n)=g(k,m,n)\rightarrow f(S(k),m,n)=g(S(k),m,n))$. By Lemma \ref{lemma1}.(ii) for any $n$ there are $h_1$ and $h_2$ such that for all $m,k$ we have $h_1(m,k)=f(k,m,n)$, $h_2(m,k)=g(k,m,n)$. This means that $h_1(m,0)=h_2(m,0)\&(\forall k)(h_1(m,k)=h_2(m,k)\rightarrow h_1(m,S(k))=h_2(m,S(k)))$. By $(ii)$ we have $h_1(m,k)=h_2(m,k)$, i.e. for any $n$ we have $f(k,m,n)=g(k,m,n)$.
\end{proof}

For $COMI_{fcn}$ the Lemma \ref{lemma2} can be reformulated in the following general form that was stated for $ETF\backslash PERM$ and called Equational Induction in \cite{Fr21}.

\begin{lemma} [Friedman \cite{Fr21}] \label{lemma3} Let $s,t$ be terms, then $COMI_{fcn}$ proves induction for the equation $s=t$ on any variable $n$ of sort $\omega$.
\end{lemma}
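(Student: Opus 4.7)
The plan is to reduce the proposed equational induction on $s=t$ to the rudimentary induction axiom (6), which is already induction for the equation $f(n)=g(n)$, by representing $s$ and $t$ as unary functions of $n$ via Lemma~\ref{lemma1}(iii).

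More concretely, I would first fix, in the ambient context, all free variables of $s$ and $t$ other than $n$ (whether number variables or function variables) as parameters. Applying Lemma~\ref{lemma1}(iii) with distinguished variable $n$ to the term $s$, one obtains a unary function $f$ with $(\forall n)(f(n)=s)$; likewise there is a unary $g$ with $(\forall n)(g(n)=t)$. Instantiating these equations at $0$ turns $s[0/n]=t[0/n]$ into $f(0)=g(0)$, and instantiating at $S(n)$ turns the induction step $(\forall n)(s=t \to s[S(n)/n]=t[S(n)/n])$ into $(\forall n)(f(n)=g(n) \to f(S(n))=g(S(n)))$. Then axiom~(6) applied to $f$ and $g$ delivers $f(n)=g(n)$, and substituting back via $f(n)=s$ and $g(n)=t$ yields $s=t$, as required.

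The potentially delicate point — and the only thing that is not pure bookkeeping — is verifying that Lemma~\ref{lemma1}(iii) is genuinely applicable when $s$ or $t$ carries additional free variables beyond $n$, including free function variables. This is handled by the fact that Lemma~\ref{lemma1} is a theorem schema: its existential statement is provable in $COM_{fcn}$ with all the other free variables of the relevant term left as free parameters, so the witnesses $f$ and $g$ may (and in general do) depend on those parameters, which causes no trouble because the whole argument is carried out inside one fixed instantiation of those parameters. Once this is in place, the rest of the proof is a routine translation between substitution into a term and instantiation of a universally quantified equation.
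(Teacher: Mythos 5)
Your proposal is correct and follows exactly the paper's argument: invoke Lemma~\ref{lemma1}(iii) to obtain unary $f,g$ with $f(n)=s$ and $g(n)=t$ (with the other free variables of $s,t$ absorbed as parameters of the witnesses), then apply the rudimentary induction axiom to $f(n)=g(n)$. The only difference is that you spell out the bookkeeping of instantiating at $0$ and $S(n)$, which the paper leaves implicit.
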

\begin{proof} By Lemma \ref{lemma1} let $f(n)=s$ and $g(n)=t$, where $f$ and $g$ depend internally on the functions in $s,t$ and the variables over $\omega$ in $s,t$ other than $n$. Then apply rudimentary induction axiom to $f(n)=g(n)$.
\end{proof}

The following notation will be used to point to arbitrary external elements of $\omega$ and to specific elements of $\omega$ such as $1$ and $2$.

\begin{definition} [Friedman \cite{Fr21}]\label{definition*}
For each $n$, we will denote by $n^*$ the term $S(S(...S(0)))$, where $S$ is taken $n$ times. We also will simply use the notation $1$ and $2$ instead of $S(0)$, $S(S(0))$.
\end{definition}

Our next steps are to define basic arithmetic functions and then prove that they satisfy usual arithmetic properties. In \cite{Fr21} this was done by primitive recursion axioms. We add $WPRA:(\exists f)(\forall m)(f(m,0)=g(m)\&(\forall n)(f(m,S(n))=h(m,S(n),f(m,n))))$ to $COMI_{fcn}$ for this purpose.

Some arithmetic functions can be defined in $COMI_{fcn}+WPRA$ in a usual way, however defining the predecessor function $P$ becomes a little tricky. The main difference between $WPRA$ and $PRA$ is that predecessor is already implicitly present in $PRA$, so the primitive recursive function $f(m,S(n))$ immediately gets access to $n$. In the case of $WPRA$ the weakened primitive recursive function $f(m,S(n))$ gets access only to $S(n)$. So we are forced to define the predecessor function $P$ using some combinatorics, and then prove that such a function satisfies the main characteristic predecessor property $P(S(n))=n$.

First, show that $WPRA$ can be used to obtain unary iteration functions.

\begin{lemma}
The following is provable in $COMI_{fcn}+WPRA$.

$ITER:(\exists f)(f(0)=r\wedge(\forall n)(f(S(n))=h(f(n))))$.
\end{lemma}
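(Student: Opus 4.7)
The idea is to reduce unary iteration to binary recursion by adding a dummy first argument and then collapsing it. Given $r$ and $h$, I will construct the desired unary $f$ as $f(n)=F(0,n)$, where $F$ is the binary function produced by $WPRA$ from a pair of carefully chosen auxiliaries.

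First I prepare the inputs to $WPRA$. From initial functions axiom $(2.\mathrm{i})$ I obtain a unary constant function $g$ with $g(m)=r$ for every $m$. From Lemma \ref{lemma1}.(i), applied to the term $h(w)$ in the distinguished variables $u,v,w$, I obtain a ternary $h^*$ with $h^*(u,v,w)=h(w)$ for all $u,v,w$, so that the first two arguments of $h^*$ are dummies. I then invoke $WPRA$ with these $g$ and $h^*$, obtaining a binary $F$ such that $F(m,0)=g(m)=r$ and $F(m,S(n))=h^*(m,S(n),F(m,n))=h(F(m,n))$ for all $m,n$.

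To finish, I collapse the dummy first argument: by Lemma \ref{lemma1}.(iii), applied to the term $F(0,n)$, there exists a unary $f$ with $f(n)=F(0,n)$ for all $n$. Substituting back, $f(0)=F(0,0)=r$ and $f(S(n))=F(0,S(n))=h(F(0,n))=h(f(n))$, which is exactly $ITER$. There is no real obstacle here: the only technical moves are tailoring $g$ and $h^*$ to match the signature demanded by $WPRA$ and extracting a unary function from the binary output, and both are routine invocations of the machinery developed in Lemma \ref{lemma1}. The conceptual point is that $WPRA$ already contains unary iteration as a degenerate case once one arranges the signatures so that its extra arguments are simply ignored.
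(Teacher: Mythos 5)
Your proposal is correct and follows essentially the same route as the paper: build a ternary $h^*$ that ignores its first two arguments via Lemma \ref{lemma1}, feed it together with the constant function $g(m)=r$ into $WPRA$, and then collapse the dummy first argument of the resulting binary function by Lemma \ref{lemma1} to obtain the unary iterator. The only cosmetic difference is that you fix the dummy argument at $0$ whereas the paper leaves it as an arbitrary parameter $m$; this changes nothing.
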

\begin{proof} Let $r$ and $h(n)$ be given. By Lemma \ref{lemma1} set $h'(m,r,n)=h(n)$ and apply $WPRA$ to $h'$ and the constant function $g(n)=r$, thus obtaining function $f'(m,0)=r\wedge(\forall n)(f'(m,S(n))=h'(m,S(n),f'(m,n)))$. Then by Lemma \ref{lemma1} set $f(n)=f'(m,n)$. We have $f(0)=r\wedge(\forall n)(f(S(n))=f'(m,S(n))=h'(m,S(n),f'(m,n))=h(f'(m,n))=h(f(n)))$.
\end{proof}

Now we prove the existence of arithmetic functions. Note, that we don't need to expand the language $L_{fcn}$ with symbols from the following lemma, since $L_{fcn}$ is a many-sorted language with function variables. We just agree to use these symbols for convenience. For instance, using $WPRA$ we set for all $m$, $f(m,0)=m\wedge(\forall n)(f(m,S(n))=S(f(m,n)))$ and then abbreviate by ``$m+n$'' in a formula $\varphi$ that ``$\varphi$ holds of some function $f$ such that $f(m,0)=n\wedge(\forall n)(f(m,S(n))=S(f(m,n)))$''.

\begin{lemma} $COMI_{fcn}+WPRA$\label{lemmaarithm} proves the existence of functions $+$,$\cdot$, $sg$,$\overline{sg}$, $odd$, $f'$ (an auxiliary function), $P$,$-$, such that the following holds with the variables of sort $\omega$:

(i) $m+0=m\wedge(\forall n)(m+S(n)=S(m+n))$

(ii) $m\cdot0=0\wedge(\forall n)(m\cdot S(n)=m\cdot n+m)$

(iii) $sg(0)=0\wedge(\forall n)(sg(S(n))=1)$

(iv) $\overline{sg}(0)=1\wedge(\forall n)(\overline{sg}(S(n))=0)$

(v) $odd(0)=0\wedge(\forall n)(odd(S(n))=\overline{sg}(odd(n)))$

(vi) $f'(0)=0\wedge(\forall n)(f'(S(n)))=\overline{sg}(f'(n))+f'(n)\cdot S(f'(n))$

(vii) $P(0)=0\wedge(\forall n)(P(S(n)))=\overline{sg}(odd(f'(S(n))))\cdot S(P(n))$

(viii) $m-0=m\wedge(\forall n)(m-S(n)=P(m-n))$.

\end{lemma}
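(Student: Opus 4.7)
The plan is to construct each function in order by invoking $WPRA$ (or its corollary $ITER$ from the previous lemma) with suitable $g$ and $h$, relying on Lemma~\ref{lemma1} to package previously-defined terms into functions of the required arity. Each function after (i) can be built from the ones above it, so the order of proof is fixed by the statement.

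For (i) I would apply $WPRA$ with $g(m)=m$ (a projection from the initial functions axioms) and $h(m,k,r)=S(r)$ (supplied by Lemma~\ref{lemma1} from the successor), obtaining $+$. For (ii) I would re-apply $WPRA$ with $g(m)=0$ and $h(m,k,r)=r+m$, now using the newly-built $+$ inside Lemma~\ref{lemma1}, obtaining $\cdot$. For (iii)--(vi) the defining equations are pure iterations of a unary function on $\omega$, so $ITER$ suffices: $sg$ comes from $r=0$ and $h(x)=1$; $\overline{sg}$ from $r=1$ and $h(x)=0$; $odd$ from $r=0$ and $h(x)=\overline{sg}(x)$; and $f'$ from $r=0$ and $h(x)=\overline{sg}(x)+x\cdot S(x)$, the last $h$ being assembled from $+$, $\cdot$, $\overline{sg}$, $S$ via Lemma~\ref{lemma1}.

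The main obstacle is (vii). The recurrence $P(S(n))=\overline{sg}(odd(f'(S(n))))\cdot S(P(n))$ refers to $S(n)$ itself in the step, not merely to $P(n)$, so $ITER$ is no longer enough; this is precisely the situation that motivates the middle argument $S(n)$ inside $h(m,S(n),f(m,n))$ in $WPRA$. I would apply $WPRA$ with $g(m)=0$ and the ternary $h(m,k,r)=\overline{sg}(odd(f'(k)))\cdot S(r)$, obtained by Lemma~\ref{lemma1} from the previously-built $f'$, $odd$, $\overline{sg}$, $\cdot$, and $S$. This yields a binary $P'$ with $P'(m,0)=0$ and $P'(m,S(n))=\overline{sg}(odd(f'(S(n))))\cdot S(P'(m,n))$; since $m$ is dummy, Lemma~\ref{lemma1} then lets me set $P(n)=P'(0,n)$ to extract the unary predecessor.

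Finally for (viii), I would apply $WPRA$ with $g(m)=m$ and $h(m,k,r)=P(r)$, the latter assembled from the just-built $P$ by Lemma~\ref{lemma1}, giving subtraction that satisfies the stated recursion. Note that the lemma only asserts the recursive equations for each function; it makes no claim here that $P$ actually returns the predecessor in the intended sense (i.e.\ $P(S(n))=n$) — that property is presumably verified later, using Equational Induction (Lemma~\ref{lemma3}) together with facts about the parity of $f'(S(n))$.
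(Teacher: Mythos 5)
Your proposal is correct and matches the paper's proof, which states in one sentence that each function is obtained from $WPRA$, $ITER$ and the composition axioms applied to the previously defined functions; you have simply spelled out the choices of $g$ and $h$ explicitly. Your observation that (vii) is the case genuinely requiring $WPRA$ rather than $ITER$ (because the step term needs access to $S(n)$ itself) and that $P(S(n))=n$ is deferred to Theorem \ref{Theorem1}.(xxvi) is exactly the point the surrounding discussion in the paper makes.
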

\begin{proof}
Each of functions is defined using $WPRA$, $ITER$ and composition axioms with previously defined functions.
\end{proof}

It follows from Lemma \ref{lemma3} that each of these functions is extensionally unique in the sense that any two such functions agree everywhere.

The next theorem shows that these functions satisfy main properties of corresponding arithmetic functions. Proofs of $(i)-(vii)$ and $(xiii)-(xiv)$ are from \cite{Fr21}.

\begin{theorem}\label{Theorem1} The following are provable in $COMI_{fcn}+WPRA$.

(i) $n+1=S(n)$

(ii) $0+n=n$

(iii) $S(n)+m=n+S(m)$

(iv) $n+m=m+n$

(v) $(n+m)+r=n+(m+r)$

(vi) $n+m=0\leftrightarrow n=0\wedge m=0$

(vii) $0\cdot n=0$

(viii) $S(n)\cdot m=n\cdot m+m$

(ix) $m\cdot n=n\cdot m$

(x) $1\cdot n=n$

(xi) $m\cdot n=0\leftrightarrow n=0\vee m=0$

(xii) $n+n=2\cdot n$.

(xiii) $sg(0)=0$ and $sg(n)=1$ for all $n\not=0$

(xiv) $\overline{sg}(0)=1$ and $\overline{sg}(n)=0$ for all $n\not=0$

(xv) $\overline{sg}(n\cdot S(n))=\overline{sg}(n)$

(xvi) $odd(1)=1$

(xvii) $f'(1)=1$

(xviii) $\overline{sg}(f'(0))=1$ and $\overline{sg}(f'(n))=0$ for all $n\not=0$

(xix) $odd(2\cdot n)=0$

(xx) $odd(S(S(n)))=odd(n)$

(xxi) $odd(m+2\cdot n)=odd(m)$

(xxii) $odd(n+S(n))=1$

(xxiii) $odd(n\cdot S(n))=0$

(xxiv) $odd(f'(1))=1$ and $odd(f'(n))=0$ for all $n\not=1$

(xxv) $P(0)=0$ and $S(P(n))=n$ for all $n\not=0$

(xxvi) $P(S(n))=n$
\end{theorem}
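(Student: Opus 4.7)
The plan is to prove each item by equational induction (Lemma~\ref{lemma3}) applied to the appropriate variable, proceeding through the list in order so that each part is free to cite the defining equations of Lemma~\ref{lemmaarithm} together with all earlier items. The basic arithmetic items (i)--(xiv) are standard Peano-style manipulations: (i), (x), (xii), (xvi), (xvii) are one-line computations from the definitions and earlier items; (ii)--(v), (vii)--(ix) are straightforward inductions yielding commutativity, associativity, and the ``left'' recursion laws for $+$ and $\cdot$; and (vi), (xi), (xiii), (xiv), (xv) reduce, via successor axiom (1)(iii), to a case split on whether the relevant variable is $0$ or of the form $S(k)$, which the definitions then immediately dispatch.

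For the parity items (xviii)--(xxiii) I would first record the involution identity $\overline{sg}(\overline{sg}(odd(n))) = odd(n)$ by one equational induction, simply applying $\overline{sg}$ to both sides of the inductive hypothesis. The crucial auxiliary is (xviii), ``$f'(n) \neq 0$ for $n \neq 0$''; I prove it by induction on the successor: the base case gives $f'(1) = \overline{sg}(0) + 0 \cdot S(0) = 1 \neq 0$, and for $n = S(k)$ with $k \neq 0$ the induction hypothesis yields $\overline{sg}(f'(k)) = 0$, so $f'(S(k)) = f'(k) \cdot S(f'(k))$, which is nonzero by (xi) and successor axiom (1)(i). The remaining items are routine: (xix) by induction using $2 \cdot S(n) = S(S(2 \cdot n))$ and the involution identity; (xx) directly from the involution identity; (xxi) by induction on $n$ using (xx); (xxii) from $n + S(n) = S(2 \cdot n)$ and (xix); and (xxiii) by induction on $n$, expanding $S(k) \cdot S(S(k)) = S(k) \cdot k + 2 \cdot S(k)$ and applying (xxi) together with (ix) and the inductive hypothesis.

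The main obstacle, and the reason $f'$ is introduced at all, is items (xxiv)--(xxvi). For (xxiv) the cases $n = 0$ and $n = 1$ are immediate from (xvi) and (xvii); for $n = S(k)$ with $k \neq 0$, (xviii) yields $\overline{sg}(f'(k)) = 0$, so $f'(S(k)) = f'(k) \cdot S(f'(k))$, which (xxiii) forces to be even, whence $odd(f'(S(k))) = 0$. With (xxiv) in hand I prove (xxvi) by induction on $n$: the base case unfolds to $P(S(0)) = \overline{sg}(odd(f'(1))) \cdot S(P(0)) = \overline{sg}(1) \cdot S(0) = 0$; for the induction step, $S(S(n)) \neq 1$ (else $S(n) = 0$, contradicting successor axiom (1)(i)), so (xxiv) gives $\overline{sg}(odd(f'(S(S(n))))) = 1$, and then (x) combined with the induction hypothesis $P(S(n)) = n$ yields $P(S(S(n))) = 1 \cdot S(P(S(n))) = S(n)$. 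Finally (xxv) is immediate from (xxvi): $P(0) = 0$ is by definition, and for $n \neq 0$ successor axiom (1)(iii) gives $n = S(k)$, so $S(P(n)) = S(P(S(k))) = S(k) = n$.
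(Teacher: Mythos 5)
Your proposal is correct and follows essentially the same route as the paper: the same chain of auxiliary facts about $odd$ and $f'$ culminating in the characterization that $odd(f'(n))=1$ exactly when $n=1$, which is what makes the recursion defining $P$ satisfy $P(S(n))=n$. The only deviations are cosmetic --- you obtain (xxii) and the $n\geq 2$ case of (xxiv) by direct instantiation of earlier items where the paper wraps them in (not actually needed) inductions, and you prove (xxvi) by induction and read off (xxv), whereas the paper inducts on $S(P(S(n)))=S(n)$ and then cancels $S$ by successor axiom (ii).
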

\begin{proof}

\

$(i)$ $n+1=n+S(0)=S(n+0)=S(n)$ by Lemma \ref{lemmaarithm}.(i).

$(ii)$ Use Equational Induction from Lemma \ref{lemma2} on $n$. If $n=0$ then $0+0=0$ by Lemma \ref{lemmaarithm}.(i). Suppose $0+n=n$, then $0+S(n)=S(0+n)=S(n)$.

$(iii)$ Use Equational Induction on $m$. If $m=0$ then $S(n)+0=S(n)=S(n+0)=n+S(0)$ by Lemma \ref{lemmaarithm}.(i). Suppose, $S(n)+m=n+S(m)$, then $S(n)+S(m)=S(S(n)+m)=S(n+S(m))=n+S(S(m))$ by Lemma \ref{lemmaarithm}.(i).

$(iv)$ Use Equational Induction on $m$. If $m=0$ then $n+0=n=0+n$ by $(ii)$ and Lemma \ref{lemmaarithm}.(i). Suppose $n+m=m+n$, then $n+S(m)=S(n+m)=S(m+n)=m+S(n)=S(m)+n$ by $(iii)$ and Lemma \ref{lemmaarithm}.(i).

$(v)$ Use Equational Induction on $r$. If $r=0$ then $(n+m)+0=n+m=n+(m+0)$ by Lemma \ref{lemmaarithm}.(i). Suppose $(n+m)+r=n+(m+r)$, then $(n+m)+S(r)=S((n+m)+r)=S(n+(m+r))=n+S(m+r)=n+(m+S(r))$ by Lemma \ref{lemmaarithm}.(i).

$(vi)$ If $n=0\wedge m=0$, then $n+m=0$ by Lemma \ref{lemmaarithm}.(i). Let $n+m=0$ and suppose that $m\not=0$. Then by successor axiom $(iii)$, $(\exists r)(S(r)=m)$ and $n+m=n+S(r)=S(n+r)\not=0$ by successor axiom $(i)$, contradiction. If $n\not=0$, then by successor axiom $(iii)$, $(\exists t)(S(t)=m)$ and $n+m=m+n=m+S(t)=S(m+t)$ by $(iv)$ and successor axiom $(i)$, contradiction.

$(vii)$ Use Equational Induction on $n$. If $n=0$, then $0\cdot0=0$ by Lemma \ref{lemmaarithm}.(ii). Suppose $0\cdot n=0$, then $0\cdot S(n)=0\cdot n+0=0+0=0$ by Lemma \ref{lemmaarithm}.(ii) and Lemma \ref{lemmaarithm}.(i).

$(viii)$ Use Equational Induction on $m$. If $m=0$ then $S(n)\cdot 0=0=0+0=n\cdot 0+0$ by Lemma \ref{lemmaarithm}.(i) and Lemma \ref{lemmaarithm}.(ii). Suppose $S(n)\cdot m=n\cdot m+m$, then $S(n)\cdot S(m)=S(n)\cdot m+S(n)=n\cdot m+m+S(n)=n\cdot m+S(n)+m=n\cdot m+n+1+m=n\cdot S(m)+m+1=n\cdot S(m)+S(m)$ by $(i)$, $(iv)$ and Lemma \ref{lemmaarithm}.(ii).

$(ix)$ Use Equational Induction on $n$. If $n=0$, then $m\cdot0=0=0\cdot m$ by $(vii)$ and Lemma \ref{lemmaarithm}.(ii). Suppose $m\cdot n=n\cdot m$, then $m\cdot S(n)=m\cdot n+m=n\cdot m+m=S(n)\cdot m$ by $(viii)$ and Lemma \ref{lemmaarithm}.(ii).

$(x)$ Use Equational Induction on $n$. If $n=0$, then $1\cdot0=0$ by Lemma \ref{lemmaarithm}.(ii). Suppose $1\cdot n=n$, then $1\cdot S(n)=1\cdot n+1=n+1=S(n)$ by $(i)$, Lemma \ref{lemmaarithm}.(ii) and Lemma \ref{lemmaarithm}.(i).

$(xi)$ If $n=0\vee m=0$, then $m\cdot n=0$ by $(vii)$ and Lemma \ref{lemmaarithm}.(ii). Let $m\cdot n=0$ and suppose that $m\not=0\wedge n\not=0$. Then by successor axiom $(iii)$, $(\exists r)(S(r)=m)$ and $(\exists t)(S(t)=n)$, thus by Lemma \ref{lemmaarithm}.(i) and Lemma \ref{lemmaarithm}.(ii), $m\cdot n=S(r)\cdot S(t)=S(r)\cdot t+S(r)=S(S(r)\cdot t+r)\not=0$ by successor axiom $(i)$, contradiction.

$(xii)$ Use Equational Induction on $n$. If $n=0$ then $0+0=0=2\cdot 0$ by Lemma \ref{lemmaarithm}.(i) and Lemma \ref{lemmaarithm}.(ii). Suppose $n+n=2\cdot n$, then $S(n)+S(n)=S(S(n)+n)=S(n+S(n))=S(S(n+n))=S(S(2\cdot n))=S(S(2\cdot n)+0)=S(2\cdot n+S(0))=2\cdot n+S(S(0))=2\cdot n+2=2\cdot S(n)$ by $(iv)$, Lemma \ref{lemmaarithm}.(i) and Lemma \ref{lemmaarithm}.(ii).

$(xiii)$ If $n=0$ then $sg(n)=0$ by Lemma \ref{lemmaarithm}.(iii). If $n\not=0$, then by successor axiom $(iii)$, $(\exists m)(S(m)=n)$ and $sg(n)=sg(S(m))=1$ by Lemma \ref{lemmaarithm}.(iii).

$(xiv)$ If $n=0$ then $\overline{sg}(0)=1$ by Lemma \ref{lemmaarithm}.(iv). If $n\not=0$, then by successor axiom $(iii)$, $(\exists m)(S(m)=n)$, and $\overline{sg}(n)=\overline{sg}(S(m))=0$ by Lemma \ref{lemmaarithm}.(iv).

$(xv)$ Use Equational Induction on $n$. If $n=0$ then by $(vii)$, $\overline{sg}(0\cdot S(0))=\overline{sg}(0)$. The induction step. Since $S(n)\not=0$ and $S(S(n))\not=0$, we have $S(n)\cdot S(S(n))\not=0$ by $(xi)$, thus by $(xiv)$, $\overline{sg}(S(n)\cdot S(S(n))))=0=\overline{sg}(S(n))$.

$(xvi)$ By Lemma \ref{lemmaarithm}.(iv) and Lemma \ref{lemmaarithm}.(v), $odd(1)=odd(S(0))=\overline{sg}(odd(0))=\overline{sg}(0)=1$.

$(xvii)$ By $(vii)$, Lemma \ref{lemmaarithm}.(vi) and Lemma \ref{lemmaarithm}.(iv), $f'(1)=\overline{sg}(f'(0))+f'(0)\cdot S(f'(0))=\overline{sg}(0)+0\cdot S(0)=1+0=1$.

$(xviii)$ By Lemma \ref{lemmaarithm}.(v) and Lemma \ref{lemmaarithm}.(vi), $\overline{sg}(f'(0))=\overline{sg}(0)=1$. Use Equational Induction on $n$ to show that $\overline{sg}(f'(S(n)))=0$ for all $n$. If $n=0$ then by $(xvii)$, $\overline{sg}(f'(S(0)))=\overline{sg}(1)=0$. Suppose $\overline{sg}(f'(S(n)))=0$, then by $(xv)$ and Lemma \ref{lemmaarithm}.(vi), $\overline{sg}(f'(S(S(n))))=\overline{sg}(\overline{sg}(f'(S(n)))+f'(S(n))\cdot S(f'(S(n))))=\overline{sg}(0+f'(S(n))\cdot S(f'(S(n))))=\overline{sg}(f'(S(n)))=0$.

$(xix)$ Use Equational Induction on $n$. If $n=0$ then by Lemma \ref{lemmaarithm}.(ii) and Lemma \ref{lemmaarithm}.(v), $odd(2\cdot 0)=odd(0)=0$. Suppose $odd(2\cdot n)=0$, then $odd(2\cdot S(n)))=odd(2\cdot n+2)=odd(2\cdot n+S(S(0)))=odd(S(2\cdot n+S(0)))=odd(S(S(2\cdot n+0)))=\overline{sg}(odd(S(2\cdot n)))=\overline{sg}(\overline{sg}(odd(2\cdot n)))=\overline{sg}(\overline{sg}(0))=\overline{sg}(1)=0$ by Lemma \ref{lemmaarithm}.(i), \ref{lemmaarithm}.(ii), \ref{lemmaarithm}.(iv), \ref{lemmaarithm}.(v).

$(xx)$ Use Equational Induction on $n$. If $n=0$ then $odd(S(S(0)))=\overline{sg}(\overline{sg}(odd(0)))=\overline{sg}(\overline{sg}(0))=\overline{sg}(1)=0=odd(0)$. Suppose $odd(S(S(n)))=odd(n)$, then $odd(S(S(S(n))))=\overline{sg}(odd(S(S(n))))=\overline{sg}(odd(n))=odd(S(n))$.

$(xxi)$ Use Equational Induction on $n$. If $n=0$ then by Lemma \ref{lemmaarithm}.(i), Lemma \ref{lemmaarithm}.(ii), $odd (m+2\cdot 0)=odd(m+0)=odd(m)$. Suppose $odd(m+2\cdot n)=odd(m)$, then by $(i)$, $(iv)$, $(v)$, $(xii)$, $(xx)$ and Lemma \ref{lemmaarithm}.(i), $odd(m+2\cdot S(n))=odd(m+S(n)+S(n))=odd(m+n+1+n+1)=odd(m+n+n+S(0)+S(0))=odd(S(m+2\cdot n+S(0)+0))=\overline{sg}(odd(m+2\cdot n+S(0)))=\overline{sg}(odd(S(m+2\cdot n+0)))=\overline{sg}(\overline{sg}(odd(m+2\cdot n)))=\overline{sg}(\overline{sg}(odd(m)))=\overline{sg}(odd(S(m)))=odd(S(S(m)))=odd(m)$.

$(xxii)$ Use Equational Induction on $n$. If $n=0$ then by $(xvi)$ and Lemma \ref{lemmaarithm}.(v), $odd(0+S(0))=odd(1)=1$. Suppose $odd(n+S(n))=1$, then $odd(S(n)+S(S(n)))=odd(S(S(n)+S(n)))=\overline{sg}(odd(S(n)+S(n)))=\overline{sg}(odd(S(S(n)+n)))=\overline{sg}(\overline{sg}(odd(S(n)+n)))=
\overline{sg}(\overline{sg}(odd(n+S(n))))=\overline{sg}(\overline{sg}(1))=\overline{sg}(0)=1$ by $(iv)$, Lemma \ref{lemmaarithm}.(i) and Lemma \ref{lemmaarithm}.(v).

$(xxiii)$ Use Equational Induction on $n$. If $n=0$ then by $(vii)$ and Lemma \ref{lemmaarithm}.(v) $odd(0\cdot S(0))=odd(0)=0$. Suppose $odd(n\cdot S(n))=0$, then $odd(S(n)\cdot S(S(n)))=odd(S(n)\cdot S(n)+S(n)))=odd(S(S(n)\cdot S(n)+n))=\overline{sg}(odd(S(n)\cdot S(n)+n))=\overline{sg}(odd(S(n)\cdot n+S(n)+n))=\overline{sg}(odd(S(n)\cdot n+n+S(n)))=\overline{sg}(odd(S(S(n)\cdot n+n+n)))=\overline{sg}(\overline{sg}(odd(S(n)\cdot n+2\cdot n)))=\overline{sg}(\overline{sg}(odd(n\cdot S(n)+2\cdot n)))=\overline{sg}(\overline{sg}(odd(n\cdot S(n))))=\overline{sg}(\overline{sg}(0))=0$ by $(iv)$, $(ix)$, $(xix)$, $(xxi)$, Lemma \ref{lemmaarithm}.(i), Lemma \ref{lemmaarithm}.(ii) and Lemma \ref{lemmaarithm}.(v).

$(xxiv)$ By Lemma \ref{lemmaarithm}.(v) and Lemma \ref{lemmaarithm}.(vi), $odd(f'(0))=odd(0)=0$. By $(xvi)$, $(xvii)$ and Lemma \ref{lemmaarithm}.(vi), $odd(f'(1))=odd(1)=1$.

Now, use Equational Induction on $n$ to show that $odd(f'(S(S(n))))=0$ for all $n$. If $n=0$ then $odd(f'(S(S(0))))=odd(\overline{sg}(f'(S(0)))+f'(S(0))\cdot S(f'(S(0))))=odd(\overline{sg}(1)+1\cdot S(1))=odd(0+S(1))=odd(S(1))=\overline{sg}(odd(1)))=\overline{sg}(1)=0$ by $(x)$, $(xiv)$, $(xvi)$, $(xvii)$, Lemma \ref{lemmaarithm}.(v) and Lemma \ref{lemmaarithm}.(vi).

The induction step. Suppose $odd(f'(S(S(n))))=0$, then $odd(f'(S(S(S(n)))))=odd(\overline{sg}(f'(S(S(n))))+f'(S(S(n)))\cdot S(f'(S(S(n)))))=odd(0+f'(S(S(n))))\cdot S(f'(S(S(n)))))=0$ by $(xviii)$ and $(xxiii)$.

$(xxv)$ By Lemma \ref{lemmaarithm}.(vii), $P(0)=0$.

Use Equational Induction on $n$ to show that $S(P(S(n)))=S(n)$ for all $n$. By $(xxiv)$ and Lemma \ref{lemmaarithm}.(vii), $P(1)=\overline{sg}(odd(f'(1)))\cdot S(P(0))=\overline{sg}(1)\cdot S(P(0))=0\cdot 1=0$, so $S(P(S(0)))=S(P(1))=S(0)$.

Suppose $S(P(S(n)))=S(n)$, then by $(xxiv)$ and Lemma \ref{lemmaarithm}.(vii), $S(P(S(S(n))))=S(\overline{sg}(odd(f'(S(S(n)))))\cdot S(P(S(n))))=S(\overline{sg}(0)\cdot S(n))=S(1\cdot S(n))=S(S(n))$.

$(xxvi)$ Let $S(n)=m$. By $(xxv)$, $S(P(m))=m$, so $S(n)=m=S(P(m))$, hence $n=P(m)$ by successor axiom $(ii)$. So we have $P(S(n))=P(m)=n$.
\end{proof}

\begin{corollary}
$PRA$ and $WPRA$ are pairwise equivalent over $COMI_{fcn}$.
\end{corollary}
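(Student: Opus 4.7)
The corollary has two directions, one routine and one substantive.

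For $PRA \vdash WPRA$, I would proceed by a purely syntactic adjustment. Given $g$ and $h$ as in the statement of WPRA, I would invoke Lemma \ref{lemma1} to construct a ternary function $h'$ satisfying $h'(m,n,k) = h(m, S(n), k)$ (composing $h$ with the successor on its middle argument). Applying PRA to $g$ and $h'$ yields some $f$ with $f(m,0) = g(m)$ and $f(m, S(n)) = h'(m, n, f(m,n)) = h(m, S(n), f(m,n))$, which is exactly what WPRA asserts. This direction uses nothing beyond $COM_{fcn}$ together with PRA itself.

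For the converse $WPRA \vdash PRA$ over $COMI_{fcn}$, the obstacle is precisely the one flagged just before Lemma \ref{lemmaarithm}: the hypothesis $h$ in PRA takes $n$ as an argument at recursion step $n+1$, whereas WPRA only hands $h$ the value $S(n)$ at that step. My plan is to circumvent this by using the predecessor function $P$ supplied by Lemma \ref{lemmaarithm}, together with its key property $P(S(n)) = n$ established in Theorem \ref{Theorem1}(xxvi). Concretely, given $g$ and $h$ for PRA, use Lemma \ref{lemma1} to form $h'(m, k, r) = h(m, P(k), r)$, and apply WPRA to $g$ and $h'$. The resulting $f$ satisfies $f(m, 0) = g(m)$ and
\[ f(m, S(n)) = h'(m, S(n), f(m, n)) = h(m, P(S(n)), f(m, n)) = h(m, n, f(m, n)), \]
which is PRA.

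The main obstacle is therefore not in the corollary itself but in what precedes it: one needs a predecessor function definable from WPRA and composition alone, and one needs to verify $P(S(n)) = n$. Both have already been carried out (in Lemma \ref{lemmaarithm}(vii) and Theorem \ref{Theorem1}(xxvi)), so once those are in hand the corollary collapses to the short calculation above.
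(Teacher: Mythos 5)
Your proposal is correct and matches the paper's proof essentially verbatim: both directions use the same substitutions ($h'(m,n,r)=h(m,S(n),r)$ for $PRA\vdash WPRA$ and $h'(m,n,r)=h(m,P(n),r)$ for the converse), relying on the predecessor function from Lemma \ref{lemmaarithm} and the identity $P(S(n))=n$ from Theorem \ref{Theorem1}.(xxvi). No differences worth noting.
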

\begin{proof}

\

Assume $PRA$. Let $g(m)$ and $h(m,n,r)$ be given. Set $h'(m,n,r)=h(m,S(n),r)$ and apply $PRA$ to get the function $f$ such that $(\forall m)(f(m,0)=g(m)\&(\forall n)(f(m,S(n))=h'(m,n,f(m,n))))=h(m,S(n),f(m,n))$.

Assume $WPRA$. Let $g(m)$ and $h(m,n,r)$ be given. Set $h'(m,n,r)=h(m,P(n),r)$ and apply $WPRA$ to get the function $f$ such that $(\forall m)(f(m,0)=g(m)\&(\forall n)(f(m,S(n))=h'(m,S(n),f(m,n))))=h(m,P(S(n)),f(m,n))=h(m,n,f(m,n))$.
\end{proof}

\section{The system $PRA_{fcn}$}

\

This section contains several results from the unpublished Friedman's preprint \cite{Fr21} showing that primitive recursion axioms added to $COMI_{fcn}$ allow to extend induction to broader classes of formulas, to define functions by conditional terms, to define $<$ and the Cantor pairing function. These results could be just cited, however for the convenience of the reader we fill in small gaps and present their full proofs in standard notations. For instance, we prove Lemma \ref{quotient} that shows the existence of the quotient function that gives the result of the division of a number by 2. This function is necessary for the Friedman's proof of $PRA_{fcn}\vdash PERM\rightarrow MIN^1$. We hope that such an exposition of proofs from \cite{Fr21} is not superfluous and will help the present paper to become self-sufficient as well as help strict reverse mathematics to become more accessible.

The main result of this section is Theorem \ref{Theorem2} showing that minimization axioms $MIN^1$, $MIN^2$, $MIN^3$ are equivalent to each other and to $PERM$ over $PRA_{fcn}$.

As was shown in the previous section, rudimentary induction axioms together with composition axioms gives Equational Induction, i.e. induction for equations $s=t$ where $s$ and $t$ are terms. By adding primitive recursion axioms one can prove induction for propositional combinations of equations as Lemma \ref{induction} shows.

\begin{definition}[Friedman \cite{Fr21}] An open formula is propositional combination of equations. A singular open formula is a propositional combination of equations of the form $t=n^*$.
\end{definition}

\begin{lemma}[Friedman \cite{Fr21}]\label{induction} The following are provable in $PRA_{fcn}$.

(i) $1-S(n)=0$

(ii) $2-S(S(n))=0$

(iii) $n=0\vee n=1\vee 2-n=0$

(iv) If $m\not=0$ then $P(n)=m\leftrightarrow n=S(m)$

(v) $0-n=0$

(vi) $n-1=0\leftrightarrow n=0\vee n=1$

(vii) $1-n=0\leftrightarrow n\not=0$

(viii) $n=1\leftrightarrow (n-1)+(1-n)=0$

(ix) $(2-n)\cdot n$ is $1$ at $1$ and $0$ elsewhere

(x) For $m\geq 2$, $n=m^*\leftrightarrow P(...P(n))=1$, where there are $m-1$ $P$'s

(xi) Every singular open formula is provably equivalent to a propositional combination of equations of the form $t=1$

(xii) Every singular open formula is provably equivalent to an equation $t=0$

(xiii) Induction for singular open formulas on any variable $n$ of sort $\omega$

(xiv) $P(n)=0\rightarrow n=0\vee n=1$

(xv) $S(n)-S(m)=n-m$

(xvi) $(n+m)-m=n$

(xvii) $n+r=m+r\rightarrow n=m$

(xviii) $n=m\vee r=0\leftrightarrow n\cdot sg(r)=m\cdot sg(r)$

(xix) $n-n=0$

(xx) $S(n)-n=1$

(xxi) If $n\not=0$ then $P(n)+S(m)=n+m$

(xxii) $n-m=0\vee (n-m)+m=n$

(xxiii) $n-m=m-n=0\rightarrow n=m$

(xxiv) $n-m=m-n=0\leftrightarrow n=m$

(xxv) $n-m=0\vee m-n=0$

(xxvi) $n-m=0\leftrightarrow(\exists r)(n+r=m)$

(xxvii) Every open formula is provably equivalent to an equation $t=0$

(xxviii) Induction for open formulas on any variable $n$ of sort $\omega$
\end{lemma}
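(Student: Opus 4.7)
The plan is to work through the twenty-eight items in roughly four thematic phases, each relying on the previous and on the arithmetic machinery already established in Lemma \ref{lemmaarithm} and Theorem \ref{Theorem1}. \emph{Phase 1 (items (i)--(ix))} is direct computation combined with Equational Induction (Lemma \ref{lemma3}). For example, (i) is immediate from the defining recursion $1-S(n)=P(1-n)$ together with $P(1)=P(S(0))=0$; (ii) falls out of (i) by one more unfolding; (iii) and (iv) are case analyses using successor axiom (iii) and Theorem \ref{Theorem1}.(xxvi); (v) is Equational Induction on $n$; (vi)--(viii) use Theorem \ref{Theorem1}.(xiv) (the full characterisation of $\overline{sg}$) together with (i)--(ii); and (ix) follows from (vi)--(viii) by computing the two cases $n=0$, $n=1$, and $n\geq 2$.

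\emph{Phase 2 (items (x)--(xiii))} sets up the machinery for singular open formulas. For (x) I proceed by external induction on the numeral $m$: the base $m=2$ is Theorem \ref{Theorem1}.(xxvi); in the step, $n=S(m^*)$ iff $P(n)=m^*$ (using (iv) and $m^*\not=0$), and the induction hypothesis rewrites $P(n)=m^*$ as $P^{m-1}(P(n))=1$. Item (xi) uses (x) once more (and (iii) to handle the special cases $n^*=0,1$) to replace each atomic equation $t=n^*$ by a propositional combination of equations of the form $t'=1$. For (xii) I translate Boolean combinations of $t=1$ (equivalently $t-1+1-t=0$ by (viii)) into a single equation via $A\wedge B\leftrightarrow s_A+s_B=0$ (Theorem \ref{Theorem1}.(vi)), $A\vee B\leftrightarrow s_A\cdot s_B=0$ (Theorem \ref{Theorem1}.(xi)), and $\neg A\leftrightarrow \overline{sg}(s_A)=0$ (Theorem \ref{Theorem1}.(xiv)); implications and biconditionals reduce to these. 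Item (xiii) is then immediate: given a singular open formula $\varphi(n)$, rewrite it as $t(n)=0$ by (xii) and apply Lemma \ref{lemma3}.

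\emph{Phase 3 (items (xiv)--(xxvi))} is a sequence of arithmetic facts, most of which can be phrased as bare equations and proved by Equational Induction, calling on the previous items as they accumulate. (xiv) comes from (iii) and (xxv) of Theorem \ref{Theorem1}; (xv) is Equational Induction on $m$ unfolding $-$ and $P$; (xvi) is Equational Induction on $m$ using (xv); (xvii) is immediate from (xvi); (xviii)--(xx) are short computations using (xv) and Theorem \ref{Theorem1}.(xiii)--(xiv); (xxi) is Equational Induction on $m$ using Theorem \ref{Theorem1}.(xxvi). For disjunctive statements like (xxii) and (xxv) I replace the disjunction $A\vee B$ by the equation $s_A\cdot s_B=0$ and then run Equational Induction, so no open-formula induction is required. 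Items (xxiii)--(xxiv) combine (xxii) with (xvii); (xxvi) reduces to the case $r=m-n$ via (xxii).

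\emph{Phase 4 (items (xxvii)--(xxviii))} extends the reduction to arbitrary open formulas. The key new ingredient is (xxiv): any atomic equation $s=t$ is equivalent to $(s-t)+(t-s)=0$, so once atomic equations are in $t=0$ form the same Boolean-to-arithmetic dictionary from phase 2 collapses any open formula to a single equation $t=0$, proving (xxvii). Then (xxviii) follows from (xxvii) and Lemma \ref{lemma3}. The main obstacle is really phase 2: one must verify that the somewhat indirect definition of $P$ in Lemma \ref{lemmaarithm} (via $f'$ and $odd$) is strong enough for the iterated-predecessor characterisation (x) and that the Boolean dictionary of phase 2 handles all connectives; the later phases are then essentially bookkeeping driven by Equational Induction together with the dictionary.
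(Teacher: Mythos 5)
Your overall route is the same as the paper's: direct computation plus Equational Induction for the arithmetic items, external induction on the numeral for (x) and on formula structure for (xi)--(xii), a Boolean-to-arithmetic dictionary ($\wedge$ via $+$, $\vee$ via $\cdot$, $\neg$ via $\overline{sg}$) to collapse open formulas to single equations, and Lemma \ref{lemma3} to close (xiii) and (xxviii). Most of the item-by-item attributions are consistent with the paper's proof, and the final reduction in (xxvii) via $s=t\leftrightarrow(s-t)+(t-s)=0$ is exactly the paper's use of (xxiv).

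There is, however, one concrete circularity in Phase 3. For (xxii), $n-m=0\vee(n-m)+m=n$, you propose to ``replace the disjunction $A\vee B$ by the equation $s_A\cdot s_B=0$ and then run Equational Induction.'' That encoding requires both disjuncts to already be in the form $t=0$. The first disjunct is, but the second, $(n-m)+m=n$, is a general equation $s=t$; converting it to $u=0$ form needs (xxiv) (or (xxvii)), and (xxiv) is proved from (xxiii), which in turn is proved using (xxii). So as described, your induction for (xxii) cannot be set up. This is precisely why item (xviii), $n=m\vee r=0\leftrightarrow n\cdot sg(r)=m\cdot sg(r)$, appears in the lemma: it turns a disjunction of a \emph{general} equation with an $r=0$ clause into a single equation without any prior reduction to $t=0$ form, and the paper invokes it to make both (xxii) and (xxiii) amenable to Equational Induction. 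Relatedly, your one-line claim that (xxiii) follows by ``combining (xxii) with (xvii)'' does not go through: when $n-m=0$ the disjunction (xxii) gives no information about $(n-m)+m$, and the paper instead runs a genuine induction on $m$ for (xxiii) using (xiv), (xv), (xx) and (xxii). Both defects are repairable with tools you already have on your list ((xviii) in particular), but the plan as written would fail at (xxii).
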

\begin{proof}

\

$(i)$ Use Equational Induction on $n$. If $n=0$ then by Lemma \ref{lemmaarithm}.(viii) and Theorem \ref{Theorem1}.(xxvi), $1-S(0)=P(1-0)=P(1)=P(S(0))=0$. Suppose $1-S(n)=0$, then $1-S(S(n))=P(1-S(n))=P(0)=0$.

$(ii)$ Use Equational Induction on $n$. If $n=0$ then by Lemma \ref{lemmaarithm}.(viii) and Theorem \ref{Theorem1}.(xxvi), $2-S(S(0))=P(2-S(0))=P(P(2-0))=P(P(2))=P(P(S(S(0))))=P(S(0))=0$. Suppose $2-S(S(n))=0$, then $2-S(S(S(n)))=P(2-S(S(n)))=P(0)=0$.

$(iii)$ Let $n\not=0$ and $n\not=1$. Since $n\not=0$, we have $(\exists m)(S(m)=n)$ by successor axiom $(iii)$. Since $S(m)=n\not=1=S(0)$, we have $m\not=0$ and by successor axiom $(iii)$, $(\exists r)(S(r)=m)$. Then $n=S(S(r))$, hence $2-n=2-S(S(r))=0$ by $(ii)$.

$(iv)$ Let $m\not=0$ and $P(n)=m$, then $n\not=0$ and by successor axiom $(iii)$, $(\exists r)(S(r)=n)$. Then by Theorem \ref{Theorem1}.(xxvi), $m=P(n)=P(S(r))=r$ and $S(m)=S(r)=n$. Conversely, if $n=S(m)$ then $P(n)=P(S(m))=m$ by Theorem \ref{Theorem1}.(xxvi).

$(v)$ Use Equational Induction on $n$. If $n=0$ then by Lemma \ref{lemmaarithm}.(viii), $0-0=0$. Suppose $0-n=0$, then $0-S(n)=P(0-n)=P(0)=0$.

$(vi)$ Let $n-1=0$ and suppose that $n\not=0$ and $n\not=1$. Then as in $(iii)$ we have $n=S(S(r))$ for some $r$, so $n-1=P(n)=P(S(S(r)))=S(r)\not=0$ by successor axiom $(i)$, contradiction. Conversely, if $n=0$ then $0-1=0$ by $(v)$, if $n=1$ then $1-1=P(1-0)=P(1)=0$.

$(vii)$ If $n=0$ then $1-0=1\not=0$. If $n\not=0$, then by successor axiom $(iii)$, $(\exists m)(S(m)=n)$. So $1-n=1-S(m)=0$ by $(i)$.

$(viii)$ If $n=1$ then $(n-1)+(1-n)=0+0=0$ by $(vi)$ and $(vii)$. Suppose $(n-1)+(1-n)=0$ then by Theorem \ref{Theorem1}.(vi), $n-1=0$ and $1-n=0$. By $(vi)$, $n=0\vee n=1$. By $(vii)$, we have $n\not=0$. Hence $n=1$.

$(ix)$ If $n=1$ then $(2-1)\cdot1=(S(S(0))-S(0))\cdot 1=P(S(S(0)-0))\cdot1=P(S(S(0)))\cdot1=S(0)\cdot1=1$. If $n\not=1$ then by $(iii)$, $n=0\vee2-n=0$, hence by Theorem \ref{Theorem1}.(xi), $(2-n)\cdot n=0$.

$(x)$ Note that $n$ is a variable of sort $\omega$ and $m$ is an external natural number. If $n=m^*$ then by Definition \ref{definition*}, $n=S(...S(0))$, where there are $m$ $S$'s. By Theorem \ref{Theorem1}.(xxvi), $P(...P(n))=S(0)=1$, since there are $m-1$ $P$'s.

To prove the converse we use external induction on $m\geq2$. The basis case is $P(n)=1\rightarrow n=2$ with $m=2$. To prove this suppose $P(n)=1$. Then $n\not=0$ and $n\not=1$ and as in $(iii)$ we have that $n=S(S(r))$ for some $r$. Then $P(n)=P(S(S(r)))=S(r)$, so $S(r)=P(n)=1=S(0)$. By successor axiom $(ii)$, $r=0$. Then $n=S(S(0))=2$.

The induction step. Suppose $P(...P(n))=1\rightarrow n=m^*$, $m\geq2$ is provable in $PRA_{fcn}$, where there are $m-1$ $P$'s. Then by substituting $P(n)$ for $n$, $P(...P(P(n)))=1\rightarrow P(n)=m^*$ is provable in $PRA_{fcn}$. By $(iv)$, $P(n)=m^*\leftrightarrow n=S(m^*)=(m+1)^*$. Then $P(...P(n))=1\rightarrow n=(m+1)^*$ is provable in $PRA_{fcn}$, where there are $m$ $P$'s.

$(xi)$ Let $\varphi$ be a propositional combination of equations $t_1=n^*_1$, $t_2=n_2^*$,..., $t_k=n_k^*$ for some $k$. For all $i$, $1\leq i\leq k$, if $n^*_i=0$ then by Theorem \ref{Theorem1}.(xiv), $t_i=0$ is replaced by $\overline{sg}(t_i)=1$, if $n^*_i=1$ then $t_i=1$ remains unchanged, if $n_i^*\geq2$ then by $(x)$, $t_i=n_i^*$ is replaced by $P(...P(t))=1$.

$(xii)$ It is proved by external induction on every propositional combination of equations of the form $t=n^*$. By $(xi)$ we assume that they are propositional combinations of equations of the form $t=1$.

The basis case $t=1$. By $(viii)$, $t=1\leftrightarrow(n-1)+(1-n)=0$.

Suppose $\varphi\leftrightarrow t=0$ is provable, then $\neg\varphi\leftrightarrow t\not=0\leftrightarrow\overline{sg}(t)=0$.

Suppose $\varphi\leftrightarrow s=0$ and $\psi\leftrightarrow t=0$ are provable, then $\varphi\wedge\psi\leftrightarrow s+t=0$ is provable by Theorem \ref{Theorem1}.(vi).

$(xiii)$ Let $\varphi$ be a singular open formula. By $(xii)$, $\varphi$ is equivalent to an equation $t=0$. By Lemma \ref{lemma3} $PRA_{fcn}$ proves induction for the equation $t=0$, i.e. induction for $\varphi$.

$(xiv)$ $P(0)=0$, $P(1)=P(S(0))=0$. Suppose that $n\not=0$ and $n\not=1$. Then as in $(iii)$ we have that $n=S(S(m))$ for some $m$. Then $P(n)=P(S(S(m)))=S(m)\not=0$ by successor axiom $(i)$.

$(xv)$ Use Equational induction on $m$. If $m=0$ then $S(n)-S(0)=P(S(n)-0)=P(S(n))=n=n-0$. Suppose $S(n)-S(m)=n-m$, then $S(n)-S(S(m))=P(S(n)-S(m))=P(n-m)=n-S(m)$.

$(xvi)$ Use Equational induction on $m$. If $m=0$ then $(n+0)-0=n$. Suppose $(n+m)-m=n$, we want to prove $(n+S(m))-S(m)=n$. If $m=0$, then $(n+S(0))-S(0)=P((n+S(0))-0)=P(n+S(0))=P(S(n+0))=P(S(n))=n$. If $m\not=0$, then by by successor axiom $(iii)$, $(\exists m')(m=S(m'))$. We have $(n+m)-m=n$, i.e. $(n+S(m'))-S(m')=n$, then $S(n+m')-S(m')=n$. Hence by $(xv)$, $S(S(n+m'))-S(S(m'))=n$. Hence $(n+S(S(m')))-S(S(m'))=n$, i.e. $(n+S(m))-S(m)=n$.

$(xvii)$ $n+r=m+r\rightarrow(n+r)-r=(m+r)-r\rightarrow n=m$ by $(xvi)$.

$(xviii)$ We use Theorem \ref{Theorem1}.(xiii). If $r=0$ then $sg(r)=0$, then $n\cdot sg(r)=n\cdot0=0=m\cdot0=m\cdot sg(r)$. If $r\not=0$ and $n=m$ then $sg(r)=1$ and $n\cdot sg(r)=n=m=m\cdot sg(r)$. Conversely, assume $n\cdot sg(r)=m\cdot sg(r)$. If $r=0$, then we are done, if $r\not=0$, then $sg(r)=1$ and so $n=m$.

$(xix)$ Use Equational induction on $n$. If $n=0$, then $0-0=0$ by Lemma \ref{lemmaarithm}.(viii). Suppose $n-n=0$, then by $(xv)$, $S(n)-S(n)=n-n=0$.

$(xx)$ Use Equational induction on $n$. If $n=0$, then $S(0)-0=S(0)=1$. Suppose $S(n)-n=1$, then by $(xv)$, $S(S(n))-S(n)=S(n)-n=1$.

$(xxi)$ Let $n\not=0$. Then by successor axiom (iii), $(\exists r)(S(r)=n)$. Then $P(n)+S(m)=P(S(r))+S(m)=r+S(m)=r+(m+1)=r+(1+m)=(r+1)+m=S(r)+m=n+m$ by Theorem \ref{Theorem1}.(i), Theorem \ref{Theorem1}.(iv), Theorem \ref{Theorem1}.(v).

$(xxii)$ By $(xviii)$ this statement is equivalent to an equation in $m$ and $n$. So we can use Equational Induction on $m$ to prove the statement. If $m=0$ then $(n-0)+0=n$. Suppose $n-m=0\vee(n-m)+m=n$. We need to prove that $n-S(m)=0\vee(n-S(m))+S(m)=n$. If $n-m=0$ then $P(n-m)=0$, then $n-S(m)=P(n-m)=0$ by Lemma \ref{lemmaarithm}.(viii). If $n-m\not=0$ and $(n-m)+m=n$, then by $(xxi)$, $(n-S(m))+S(m)=P(n-m)+S(m)=(n-m)+m=n$.

$(xxiii)$ The formula can be put into the form $n-m\not=0\vee m-n\not=0\vee n=m$, where $n-m\not=0\vee m-n\not=0$ is a singular open formula. Hence by $(xii)$ we can put the formula into the form $t=0\vee n=m$ and then by $(xviii)$ into a single equation, and after that use Equational Induction on $m$.

The basis case $m=0$. If $n-0=0-n=0$ then $n=0$ and $n=m$ holds.

Suppose $n-m=m-n=0\rightarrow n=m$ and assume $n-S(m)=S(m)-n=0$. Clearly, $n\not=0$ (otherwise $S(m)-0=0$, contradiction to successor axiom $(i)$). By successor axiom $(iii)$, $n=S(r)$ for some $r$, hence $S(m)-n=S(m)-S(r)=0$. By $(xv)$, $S(m)-S(r)=m-r=0$. By Lemma \ref{lemmaarithm}.(viii), $m-n=m-S(r)=P(m-r)=P(0)=0$. If $n-m=0$ then by induction hypothesis $n=m$, then by $(xx)$, $1=S(m)-m=S(m)-n=0$, contradiction. Hence $n-m\not=0$. Since $n-S(m)=0$, we have $P(n-m)=n-S(m)=0$, then by $(xiv)$, $n-m=0\vee n-m=1$. Since $n-m\not=0$, we have $n-m=1$. Also by $(xxii)$, $(n-m)+m=n$. Then $1+m=n$ and by Theorem \ref{Theorem1}.(i) we have $n=S(m)$.

$(xxiv)$ Follows from $(xxiii)$ and $(xix)$.

$(xxv)$ Suppose $n-m\not=0$ and $m-n\not=0$. By $(xxii)$, $(n-m)+m=n$ and $(m-n)+n=m$. Hence $(n-m)+m+(m-n)+n=n+m$. By Theorem \ref{Theorem1}.(iv) and Theorem \ref{Theorem1}.(v), $(n-m)+(m-n)+m+n=0+m+n$. By $(xvii)$, $(n-m)+(m-n)=0$. By Theorem \ref{Theorem1}.(vi), $n-m=0$ and $m-n=0$.

$(xxvi)$ Suppose $n-m=0$. If $m-n=0$ then by $(xxiii)$, $n=m$ and we can take $r=0$. Assume $m-n\not=0$. By $(xxii)$, $(m-n)+n=m$ and we can take $r=m-n$.

$(xxvii)$ Let $\varphi$ be a propositional combination of equations $s_1=t_1$,...$s_k=t_k$ for some $k$. By $(xxiv)$, $\varphi$ is provably equivalent to a propositional combination of equations $r_i=0$, i.e. to a singular open formula. By $(xii)$, $\varphi$ is provably equivalent to an equation $t=0$.

$(xxviii)$ By $(xxvii)$ and $(xiii)$.
\end{proof}

\

Induction for open formulas from Lemma \ref{induction}.(xxviii) will be referred to as open induction.

The following lemma let us to define functions by conditional terms.

\

\begin{lemma}[Friedman \cite{Fr21}]\label{condition} In $PRA_{fcn}$ we can define functions
$$
f(n,m,r)=\left\{
  \begin{array}
  [c]{ll}%
  s, \mbox{ if } \varphi
   \\
  t, \mbox{ otherwise}
  \end{array}
\right.
$$

with extensional uniqueness, where $s,t$ are terms and $\varphi$ is an open formula.
\end{lemma}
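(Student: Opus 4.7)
The plan is to reduce the case analysis to arithmetic by turning the open formula $\varphi$ into a single equation, and then to use multiplication by the characteristic $0/1$ values produced by $sg$ and $\overline{sg}$ to pick between $s$ and $t$.

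First, I would invoke Lemma \ref{induction}.(xxvii) on the open formula $\varphi$ to obtain a term $u$ (in the free variables of $\varphi$, which may be among $n,m,r$ and the function variables available) such that $PRA_{fcn}\vdash\varphi\leftrightarrow u=0$. Then, using Theorem \ref{Theorem1}.(xiii)--(xiv), I note that $\overline{sg}(u)=1$ precisely when $\varphi$ holds and $\overline{sg}(u)=0$ otherwise, while $sg(u)$ has the complementary behaviour.

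Next I would set up the defining expression
\[
f(n,m,r)=s\cdot\overline{sg}(u)+t\cdot sg(u).
\]
By Theorem \ref{Theorem1}.(x) and (vii), this evaluates to $s$ exactly when $\varphi$ holds and to $t$ otherwise. Since the right-hand side is a term in the sense of $L_{fcn}$ (built from $s$, $t$, $u$, and the previously defined functions $sg$, $\overline{sg}$, $+$, $\cdot$ supplied by Lemma \ref{lemmaarithm}), Lemma \ref{lemma1}.(i) guarantees the existence of a ternary function $f$ satisfying the above equation for all $n,m,r$. (If $s,t$ are formed in a language of fewer than three displayed variables, apply the corresponding clause (ii) or (iii) of Lemma \ref{lemma1} instead.)

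For extensional uniqueness, suppose $f_1$ and $f_2$ are both obtained this way. Then the equation $f_1(n,m,r)=f_2(n,m,r)$ is an equation between terms built from the same defining expression, so by Lemma \ref{lemma3} (Equational Induction) or simply by pointwise computation both sides agree on every triple $(n,m,r)$. The only mild obstacle is bookkeeping: making sure $u$ really depends only on the intended variables and that the composition axioms (packaged in Lemma \ref{lemma1}) deliver a function of the right arity. Once that is checked, the construction above gives a uniform definition of $f$ by cases and completes the proof.
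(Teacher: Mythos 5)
Your proposal is correct and follows essentially the same route as the paper's proof: both reduce $\varphi$ to an equation $u=0$ via Lemma \ref{induction}.(xxvii) and then select between $s$ and $t$ with the term $s\cdot\overline{sg}(u)+t\cdot sg(u)$, which is exactly the paper's auxiliary selector $f_2(n,m,r)=n\cdot\overline{sg}(m)+r\cdot sg(m)$ evaluated at the terms for $s$, $u$, $t$. The only cosmetic difference is that the paper packages the selector as an explicit ternary function and composes it with the functions representing $s$, $u$, $t$, whereas you write the defining term inline and appeal to Lemma \ref{lemma1} for the existence of the resulting function.
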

\begin{proof}

By definition of terms in $L_{fcn}$ and composition axioms, the term $s$ can be rewritten as $g(n,m,r)$ and the term $t$ can rewritten as $h(n,m,r)$ for some ternary functions $g$ and $h$. By Theorem \ref{induction}.(xxvii), $\varphi$ is equivalent to an equation $f_1(n,m,r)=0$.

Define the auxiliary function $f_2(n,m,r)=n\cdot\overline{sg}(m)+r\cdot sg(m)$. By Theorem \ref{Theorem1}.(xiii) and Theorem.\ref{Theorem1}.(xiv), $f_2(n,m,r)=n$ if $m=0$ and $f_2(n,m,r)=r$ otherwise.

By composition axioms define $f(n,m,r)=f_2(g(n,m,r),f_1(n,m,r),h(n,m,r))$.

Extensional uniqueness follows from the Theorem \ref{induction}.(xxviii).
\end{proof}

\

The next step is to define $<$ in $PRA_{fcn}$ that is done by using the function from the following lemma.

\

\begin{lemma}[Friedman \cite{Fr21}]\label{lemleq} $PRA_{fcn}$ proves the existence and extensional uniqueness of the function $f$ such that $(\forall m)(f(m,0)=0)\wedge(\forall m,n)((f(m,S(n))=S(0)\leftrightarrow f(m,n)=S(0)\vee m=n)\wedge(f(m,S(n))=0\leftrightarrow f(m,n)\not=S(0)\wedge m\not=n))$.
\end{lemma}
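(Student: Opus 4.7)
The plan is to apply $PRA$ directly, using a conditionally defined step function. For the base clause I would take $g(m) = 0$ via the constant-function initial axiom, so the recursion starts at $f(m,0) = 0$. For the step I would use Lemma~\ref{condition} to define the ternary function
$$h(m,n,r) = \begin{cases} S(0), & \text{if } r = S(0) \vee m = n, \\ 0, & \text{otherwise,} \end{cases}$$
which is legitimate because $r = S(0) \vee m = n$ is an open formula. Applying $PRA$ to this $g$ and $h$ yields a binary $f$ with $f(m,0) = 0$ and $f(m,S(n)) = h(m,n,f(m,n))$ for all $m,n$.

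The two biconditionals in the statement then fall out immediately from the case-defining property of $h$: substituting $r = f(m,n)$ into the definition shows that $f(m,S(n)) = S(0)$ exactly when $f(m,n) = S(0) \vee m = n$, and $f(m,S(n)) = 0$ exactly when $f(m,n) \neq S(0) \wedge m \neq n$.

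For extensional uniqueness, I would fix two functions $f, f'$ both satisfying the stated conditions and prove $f(m,n) = f'(m,n)$ by induction on $n$, with $m$ as a parameter. The base case is the shared equation $f(m,0) = 0 = f'(m,0)$. For the successor step, given $f(m,n) = f'(m,n)$, the biconditionals pin $f(m,S(n))$ and $f'(m,S(n))$ to the same value: both equal $S(0)$ if $f(m,n) = S(0) \vee m = n$ holds, and both equal $0$ otherwise. Since $f(m,n) = f'(m,n)$ is a single equation, Equational Induction from Lemma~\ref{lemma3} applies directly.

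The main obstacle, modest as it is, is ensuring that the conditional definition of $h$ and the case split in the uniqueness argument are legitimate inside $PRA_{fcn}$; this is exactly what Lemma~\ref{condition} and open induction from Lemma~\ref{induction}.(xxviii) are designed to deliver, so the task really reduces to invoking the Section~3 machinery.
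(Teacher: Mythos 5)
Your proposal is correct and follows essentially the same route as the paper: define the step function $h$ by the conditional term $S(0)$ if $r=S(0)\vee m=n$ and $0$ otherwise via Lemma~\ref{condition}, apply $PRA$, and obtain extensional uniqueness by induction on $n$. The only cosmetic difference is that the paper cites open induction for uniqueness while you note that Equational Induction already suffices since $f(m,n)=f'(m,n)$ is a single equation; both are available in $PRA_{fcn}$.
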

\begin{proof}
By Lemma \ref{condition} define the auxiliary function $h$:

$$
h(n,m,r)=\left\{
  \begin{array}
  [c]{ll}%
  S(0), \mbox{ if } r=S(0)\vee n=m
   \\
  0, \mbox{ otherwise}
  \end{array}
\right.
$$

By $PRA$, $(\exists f)(\forall m)(f(m,0)=0)\wedge(\forall m,n)(f(m,S(n))=h(m,n,f(m,n))$.

The extensional uniqueness is by open induction.
\end{proof}

\begin{definition}[Friedman \cite{Fr21}]\label{defleq}

\

$m<n$ if and only if $f(m,n)=S(0)$ for $f$ from Lemma \ref{lemleq}.

$m\leq n$ if and only if $m<n\vee m=n$.
\end{definition}

\begin{lemma} [Friedman \cite{Fr21}]\label{leq} The following are provable in $PRA_{fcn}$.

(i) $\neg m<0$

(ii) $m<S(n)\leftrightarrow m\leq n$

(iii) Every propositional combination of inequalities $\leq,<,=$ is equivalent to an equation $t=0$

(iv) Induction holds for propositional combination of inequalities $\leq,<,=$

(v) $0\leq n$

(vi) $m\leq n\leftrightarrow m-n=0\leftrightarrow (n-m)+m=n\leftrightarrow(\exists r)(m+r=n)$

(vii) $m<n\leftrightarrow S(m)<S(n)\leftrightarrow S(m)\leq n$

(viii) $\leq$ is reflexive, connected, transitive, antisymmetric linear ordering with least element $0$

(ix) $<$ is an irreflexive linear ordering with least element $0$, where each $S(n)$ is the immediate successor of $n$, each $n\not=0$ is the immediate successor of $P(n)$

(x) $n+m\leq n+r\leftrightarrow m\leq r$

(xi) $n+m<n+r\leftrightarrow m<r$
\end{lemma}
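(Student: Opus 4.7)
The overall strategy is to exploit the fact that, by Definition \ref{defleq}, $m < n$ is literally the equation $f(m,n) = S(0)$ and $m \leq n$ is a disjunction of two equations. So every propositional combination of $\leq, <, =$ is already an open formula in the sense of Lemma \ref{induction}; this immediately yields (iii) via Lemma \ref{induction}(xxvii) and (iv) via Lemma \ref{induction}(xxviii), and moreover licenses open induction for the rest of the lemma. Item (i) is just $f(m,0) = 0 \neq S(0)$, and (ii) is the defining clause for $f(m,S(n))$ together with Definition \ref{defleq}. For (v), induct on $n$: the base is $0 = 0$; at the successor step, (ii) reduces $0 \leq S(n)$ to $0 \leq n$, which is the induction hypothesis.

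The technical heart of the lemma is (vi), from which all later parts become bookkeeping. I would first prove $m \leq n \leftrightarrow m - n = 0$ by open induction on $n$. The base case is $m \leq 0 \leftrightarrow m = 0 \leftrightarrow m - 0 = 0$ using (i) and Lemma \ref{lemmaarithm}(viii). At the successor step, (ii) unpacks $m \leq S(n)$ as $m \leq n \vee m = S(n)$; on the arithmetic side, $m - S(n) = P(m-n)$, and Lemma \ref{induction}(xiv) forces $m - n \in \{0,1\}$ whenever this vanishes, with $m - n = 1$ equivalent to $m = S(n)$ via Lemma \ref{induction}(xxii) and (xx). The remaining equivalences in (vi) then follow from Lemma \ref{induction}(xxii) and (xxvi).

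Once (vi) is in hand, (vii) follows by open induction on $n$, using $S(m) - S(n) = m - n$ from Lemma \ref{induction}(xv) and then (ii). The ordering properties in (viii) are then standard: reflexivity is immediate from $m = m$; connectedness follows from (vi) plus Lemma \ref{induction}(xxv); antisymmetry from (vi) plus Lemma \ref{induction}(xxiv); transitivity from the existential form in (vi); and (v) supplies the least element. For (ix), irreflexivity of $<$ follows from (vii) since $S(m) \leq m$ would combine with $m \leq S(m)$ and antisymmetry to give $S(m) = m$, contradicting successor axiom (i); that $S(n)$ is the immediate successor of $n$ reduces, via (ii) and (vii), to the nonexistence of $k$ with $S(n) \leq k$ and $k \leq n$, which again contradicts antisymmetry and $S(n) \neq n$; the claim about $P(n)$ uses Theorem \ref{Theorem1}(xxv) to rewrite $n = S(P(n))$. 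Finally, (x) and (xi) follow from (vi) plus a short induction on $n$ using Lemma \ref{induction}(xv), which shows $(n+r) - (n+m) = r - m$. The only step requiring care is (vi); everything else is a matter of translating inequalities into the subtraction/addition language already developed in Lemma \ref{induction}.
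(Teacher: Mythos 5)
Your proposal is correct and follows essentially the same route as the paper: reduce $<$ and $\leq$ to equations via Lemma \ref{induction} to get (iii), (iv) and open induction, establish (vi) as the bridge between the order and the subtraction/addition apparatus, and then read off the order-theoretic properties. The small divergences are harmless and sometimes cleaner: you prove (vii) through $S(m)-S(n)=m-n$ (Lemma \ref{induction}.(xv)) and (vi) where the paper argues via the existential characterization $(\exists r)(m+r=n)$ with $r\neq 0$; you get antisymmetry from Lemma \ref{induction}.(xxiv) where the paper cancels in $n+r+s=n$; and you handle (x), (xi) by $(n+r)-(n+m)=r-m$ where the paper again uses the existential form plus cancellation (Lemma \ref{induction}.(xvii)). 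All of these are sound. The one genuine (though easily repaired) slip is in your irreflexivity argument for (ix): you derive $S(m)=m$ and claim this ``contradicts successor axiom (i)'', but axiom (i) only says $S(n)\neq 0$; the statement $S(m)\neq m$ is not a successor axiom and needs its own one-line justification, e.g.\ $S(m)-m=1\neq 0=m-m$ by Lemma \ref{induction}.(xix), (xx), or an open induction on $m$ using successor axioms (i) and (ii). (The paper avoids this by deriving $0=S(r)$ instead, which does contradict axiom (i) directly.)
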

\begin{proof}

\

$(i)$ By Definition \ref{defleq}, $m<0$ if and only if $f(m,0)=S(0)$, but by Lemma \ref{lemleq}, $f(m,0)=0$ for all $m$.

$(ii)$ By Definition \ref{defleq} and Lemma \ref{lemleq}, $m<S(n)\leftrightarrow f(m,S(n))=S(0)\leftrightarrow f(m,n)=S(0)\vee m=n\leftrightarrow m<n\vee m=n\leftrightarrow m\leq n$.

$(iii)$ By Definition \ref{defleq} and Theorem \ref{induction}.(xii), $m<n\leftrightarrow f(m,n)=S(0)\leftrightarrow t=0$ for some $t$. Hence by Theorem \ref{induction}.(xii) every propositional combination of inequalities $\leq,<,=$ is equivalent to an equation $t=0$.

$(iv)$ This follows from $(iii)$ and open induction.

$(v)$ Use induction on $n$. If $n=0$ then we have $0=0$ and by Definition \ref{defleq}, $0\leq0$. Suppose $0\leq n$, then by $(ii)$, $0<S(n)$, hence by Definition \ref{defleq}, $0\leq S(n)$.

$(vi)$ We show $m\leq n\rightarrow m-n=0\rightarrow (n-m)+m=n\rightarrow(\exists r)(m+r=n)\rightarrow m\leq n$.

First, prove $m\leq n\rightarrow m-n=0$ by induction on $n$. Let $n=0$ and assume $m\leq0$, then by $(i)$, $m=0$ and $m-n=0-0=0$, so $m\leq0\rightarrow m-0=0$. Suppose $m\leq n\rightarrow m-n=0$ and assume $m\leq S(n)$. If $m=S(n)$ then by Theorem \ref{induction}.(xix), $m-S(n)=S(n)-S(n)=0$, i.e. $m\leq S(n)\rightarrow m-S(n)=0$. If $m<S(n)$ then by $(ii)$, $m\leq n$ and hence $m-n=0$. Then $m-S(n)=P(m-n)=P(0)=0$, i.e. $m=S(n)\rightarrow m-S(n)=0$.

Prove $m-n=0\rightarrow (n-m)+m=n$. Suppose $m-n=0$. By Theorem \ref{induction}.(xxii), $n-m=0\vee(n-m)+m=n$. If $n-m=0$ then by Theorem \ref{induction}.(xxiii), $m=n$ and $(n-m)+m=(n-n)+n=0+n=n$. If $(n-m)+m=n$ then we are done.

To prove $(n-m)+m=n\rightarrow(\exists r)(m+r=n)$ set $r=n-m$ and use Theorem \ref{Theorem1}.(iv).

Now prove $(\exists r)(m+r=n)\rightarrow m\leq n$. Assume $(\exists r)(m+r=n)$. By Theorem \ref{induction}.(xxvi), $m-n=0$. We prove $m-n=0\rightarrow m\leq n$ by induction on $n$. If $n=0$ then assume $m-0=0$, and we have $m=0$, $0\leq 0$. Suppose $m-n=0\rightarrow m\leq n$ and assume $m-S(n)=0$. Then $P(m-n)=0$ and by Theorem \ref{induction}.(xiv), $m-n=0\vee m-n=1$. If $m-n=0$ then $m\leq n$ and by $(ii)$, $m<S(n)$, i.e. $m\leq S(n)$. If $m-n=1$ then by Theorem \ref{induction}.(xxii), $m=(m-n)+n=1+n=S(n)$, i.e. $m\leq S(n)$.

$(vii)$ We show $m<n\rightarrow S(m)<S(n)\rightarrow S(m)\leq n\rightarrow m<n$.

Suppose $m<n$, then $m\leq n$ and by $(vi)$, let $m+r=n$ with $r\not=0$. Then by successor axiom $(iii)$, $(\exists t)(r=S(t))$ for some $t$ and $n=m+r=m+S(t)=m+t+1=m+1+t=S(m)+t$. Hence $S(m)\leq n$ by $(vi)$ and $S(m)<S(n)$ by $(ii)$.

Suppose $S(m)<S(n)$ then $S(m)\leq n$ by $(ii)$.

Suppose $S(m)\leq n$. Then $S(m)+r=n$ for some $r$ by $(vi)$. We have $n=S(m)+r=m+1+r=m+r+1=m+S(r)$. Then $m\leq n$ by $(vi)$. If $m=n$ then $m=S(m)+r=r+S(m)$, then by Theorem \ref{induction}.(xix) and Theorem \ref{induction}.(xx), $0=m-m=r+S(m)-m=r+1=S(r)$, contradiction to successor axiom (i). Hence $m<n$.

$(viii)$ By Theorem \ref{induction}.(xix), $n-n=0$. Then by $(vi)$, $n\leq n$.

By Theorem \ref{induction}.(xxv), $m-n=0\vee n-m=0$. Then by $(vi)$, $m\leq\vee n\leq m$, i.e. $\leq$ is connected.

Let $n\leq m\wedge m\leq r$, then by $(vi)$, $n+s=m\wedge m+t=r$, then $n+s+t=r$, then by $(vi)$, $n\leq r$, i.e. $\leq$ is transitive.

Let $m\leq n\wedge n\leq m$, then by $(vi)$, $n+r=m$ and $m+s=n$. Then $n+r+s=n$, and hence $r+s=0$, and by Theorem \ref{Theorem1}.(vi), $r=s=0$ and $m=n$, i.e. $\leq$ is antisymmetric.

$0$ is the least element by $(v)$.

$(ix)$ To show that $<$ is irreflexive, suppose $n<n$. Then by $(vii)$, $S(n)\leq n$, and so by $(vi)$, let $n=S(n)+r=n+1+r=r+1+n$. Then $0=n-n=r+1+n-n=r+1=S(r)$, contradiction to successor axiom $(i)$. So $<$ is irreflexive and from $(viii)$ it is transitive and has trichotomy, with least element $0$.

By $(ii)$, $n<S(n)$. Suppose $n<m<S(n)$ for some $m$, then by $(ii)$, $n<m\wedge(m=n\vee m<n)$ which contradicts the linearity of $<$. Hence each $S(n)$ is the immediate successor of $n$ in $<$.

Let $n\not=0$, then by Theorem \ref{Theorem1}.(xxv), $S(P(n))=n$ is the immediate successor of $P(n)$ in $<$.

$(x)$ Suppose $n+m\leq n+r$, then by $(vi)$, $(\exists t)(n+m+t=n+r)$. Then $m+t=r$, and by $(vi)$, $m\leq r$.

Suppose $m\leq r$, then by $(vi)$, $(\exists t)(m+t=r)$. Then $n+m+t=n+r$, and so $n+m\leq n+r$ by $(vi)$.

$(xi)$ Suppose $n+m<n+r$, then $n+m\leq n+r$ and by $(x)$, we have $m\leq r$. Now $m=r$ is impossible by irreflexivity of $<$, so $m<r$.

Suppose $m<r$, then $m\leq r$ and by $(x)$, we have $n+m\leq n+r$. If $n+m=n+r$ then $m=r$ violating irreflexivity.
\end{proof}

\

Using $\leq$ one can define functions that takes maximum value of given functions on intervals and functions that returns greatest arguments where given functions are maximized on intervals.

\begin{lemma}[Friedman \cite{Fr21}]\label{max} Let $f$ be unary and $g$ be ternary. Assume $(\forall m,n)(g(m,n,0)=0)$. $PRA_{fcn}$ proves that the following functions exist.

(i) $
max(m,n)=\left\{
  \begin{array}
  [c]{ll}%
  n, \mbox{ if } m\leq n
   \\
  m, \mbox{ otherwise}
  \end{array}
\right.
$

(ii) $f_{max}(m,n)=\max \{f(r):r\leq n\wedge g(m,n,r)=0\}$

(iii) $h(m,n)=\max \{r\leq n:f(r)=f_{max}(m,n)\}$
\end{lemma}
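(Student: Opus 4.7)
The plan is to build the three functions in sequence, using Lemma \ref{condition} for case-splits, the $PRA$ schema together with composition, and open induction (Lemma \ref{induction}(xxviii)) for extensional uniqueness in each case.

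For $(i)$, I would apply Lemma \ref{condition} directly to the open formula $m \leq n$, which is equivalent to an equation $t = 0$ by Lemma \ref{leq}(iii) and Lemma \ref{induction}(xxvii); this produces $\max(m,n)$ in exactly the branching form that lemma delivers.

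For $(ii)$, I would first construct by composition the masked value function $\phi(m,n,r) = f(r) \cdot \overline{sg}(g(m,n,r))$, which equals $f(r)$ when $g(m,n,r) = 0$ and equals $0$ otherwise. Because $g(m,n,0) = 0$ by hypothesis, $\phi(m,n,0) = f(0)$ automatically, so the candidate set is always non-empty and the running maximum is always at least $f(0)$. I would then define by primitive recursion an auxiliary function $F$ satisfying
\begin{align*}
F(m,n,0) &= f(0), \\
F(m,n,S(k)) &= \max\bigl(F(m,n,k),\, \phi(m,n,S(k))\bigr),
\end{align*}
and set $f_{max}(m,n) := F(m,n,n)$. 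Open induction on $k$ (Lemma \ref{induction}(xxviii)) then verifies that $F(m,n,k) = \max\{f(r) : r \leq k \wedge g(m,n,r) = 0\}$, so specializing at $k = n$ gives the claim. The hard part is that $F$ must carry three arguments while the stated $PRA$ axiom directly produces only binary recursive functions; the plan is to derive the needed ternary recursion from binary $PRA$ together with the composition axioms, packaging $(m,n)$ as a joint outer parameter threaded through the binary scheme. This is the step requiring the most delicate bookkeeping.

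For $(iii)$, once $f_{max}$ is available, I would define $h(m,n)$ by a parallel recursive scan: start at $0$, and at each stage $S(k) \leq n$, use Lemma \ref{condition} applied to the open equation $f(S(k)) = f_{max}(m,n)$ to overwrite the running value by $S(k)$ when the equation holds and to retain the previous value otherwise; reading the value at $k = n$ gives $h(m,n)$. Extensional uniqueness for all three functions then follows from open induction on the respective recursion variable.
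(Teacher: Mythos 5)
Your parts (i) and (iii) match the paper's proof in approach (Lemma \ref{condition} for the case split, and a recursive scan that overwrites the running value when the equation $f(S(k))=f_{max}$ holds), and your masked-value trick $f(r)\cdot\overline{sg}(g(m,n,r))$ for (ii) is a reasonable variant of the paper's conditional definition. But the step you yourself flag as delicate is a genuine gap, and your proposed fix does not go through. The recursion $F(m,n,S(k))=\max(F(m,n,k),\phi(m,n,S(k)))$ is a recursion on $k$ with \emph{two} parameters $m,n$: its step function would have to take the four inputs $m,n,k$ and the previous value, and $L_{fcn}$ has no function variables of arity $4$, while the $PRA$ axiom only produces binary functions $f(m,n)$ recursing on $n$ with a single number parameter $m$ and a ternary step $h$. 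The only way to "package $(m,n)$ as a joint outer parameter" is via a pairing function with computable projections, but in this development the pairing system is Lemma \ref{pair}, whose parts (iv)--(vi) are proved \emph{using} Lemma \ref{max}.(ii) and (iii); so your plan is circular. (Fixing $m,n$ externally and iterating a unary step is not a way out either, since the lemma asserts the existence of a single binary function $f_{max}$, uniformly in $m$ and $n$.)

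The paper avoids this entirely by making the recursion variable coincide with the bound $n$: it sets $f_{max}(m,0)=0$ and $f_{max}(m,S(n))=\max(f_{max}(m,n),f(S(n)))$ when $g(m,S(n),S(n))=0$ and $f_{max}(m,S(n))=f_{max}(m,n)$ otherwise, so that the only surviving parameter is $m$ and the step function $h(m,n,f_{max}(m,n))$ is ternary, exactly fitting the $PRA$ schema; part (iii) is handled the same way, comparing $f(S(n))$ against the running $f_{max}(m,n)$ rather than against a value at a fixed outer $n$. Your reading of the defining clause (querying $g$ at the final $n$ for every $r\leq n$) is the literal one, but it is precisely the reading that cannot be implemented before pairing is available; if you want to stay at this point in the development you must collapse the recursion index and the second argument of $g$ as the paper does.
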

\begin{proof}

\

$(i)$ By Lemma \ref{condition} and Definition \ref{defleq}.

$(ii)$ By $PRA$, $(i)$ and Lemma \ref{condition}, define $f_{max}(m,0)=0$ and

$
f_{max}(m,S(n))=\left\{
  \begin{array}
  [c]{ll}%
  max(f_{max}(m,n),f(S(n))), \mbox{ if } g(m,S(n),S(n))=0
   \\
  f_{max}(m,n), \mbox{ otherwise}
  \end{array}
\right.
$

Clearly, $f_{max}(m,n)\leq f_{max}(m,S(n))$. Also $r\leq n\rightarrow f_{max}(m,r)\leq f_{max}(m,n)$ by open induction on $n$.

Now let $g(m,n,r)=0\wedge r\leq m$, then $f(r)\leq f_{max}(m,r)\leq f_{max}(m,n)$, i.e. $f_{max}(m,n)=\max \{f(r)|r\leq n\wedge g(m,n,r)=0\}$.

$(iii)$ By $PRA$ and Lemma \ref{condition}, define $h(m,0)=0$ and

$
h(m,S(n))=\left\{
  \begin{array}
  [c]{ll}%
  S(n), \mbox{ if } f(S(n))=f_{max}(m,n)
   \\
  h(m,n), \mbox{ otherwise.}
  \end{array}
\right.
$
\end{proof}

\

The following lemma shows that $PRA_{fcn}$ proves the existence of the standard pairing system, based on the pairing function $(x^2+2xy+y^2+3x+y)/2$ introduced by Cantor in \cite{Cantor} and expressed by triangular numbers.

\begin{lemma}[Friedman \cite{Fr21}]\label{pair} The following are provable in $PRA_{fcn}$.

(i) There exists extensionally unique unary function $t$ such that $t(0)=0$ and $t(S(n))=t(n)+n+1$

(ii) $m<n\rightarrow t(m)<t(n)$

(iii) $t(0)=0$, $t(1)=1$ and $n\geq 2\rightarrow n<t(n)$

(iv) There exists $t'$ such that $t'(n)=\max\{r:t(r)\leq n\}$

(v) $(\forall n)(\exists!m,r)(n=t(m)+r\wedge r\leq m)$

(vi) There exists surjective function $\langle n,m\rangle=t(n+m)+m$. There exists $p_1$, $p_2$ such that $p_1(\langle n,m\rangle)=n$, $p_2(\langle n,m\rangle)=m$ and $\langle p_1(n),p_2(n)\rangle=n$. The function $\langle,\rangle$ is a bijection.
\end{lemma}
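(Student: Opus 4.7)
The plan is to build the six items sequentially, using $PRA$ for recursive definitions, Lemma \ref{condition} for definitions by cases, Lemma \ref{induction}.(xxviii) for open induction, and the facts about $\leq$ from Lemma \ref{leq}.

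For (i), apply $PRA$ with base $g(m)=0$ and a step function realizing $t(S(n))=t(n)+n+1$; extensional uniqueness is open induction on $n$. For (ii), the one-step inequality $t(n)<t(S(n))$ follows from $t(S(n))=t(n)+n+1$ by Lemma \ref{leq}.(xi), and the general statement follows by open induction on $n$: assuming it up to $n$, if $m<S(n)$ then $m\leq n$ by Lemma \ref{leq}.(ii), so either $m=n$ (and $t(m)=t(n)<t(S(n))$) or $m<n$ (and $t(m)<t(n)<t(S(n))$ by IH). For (iii), $t(0)$ and $t(1)$ are direct computations, while $n\geq 2\rightarrow n<t(n)$ is open and provable by induction on $n$, with $n=0,1$ vacuous, base $t(2)=3$, and step $S(n)\leq t(n)<t(S(n))$.

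For (iv), define $u$ by $PRA$ and Lemma \ref{condition}: $u(n,0)=0$, and $u(n,S(r))=S(r)$ if $t(S(r))\leq n$, else $u(n,r)$; set $t'(n):=u(n,n)$. Open induction on $r$ shows $u(n,r)$ is the largest $s\leq r$ with $t(s)\leq n$, and parts (ii), (iii) ensure no $r>n$ satisfies $t(r)\leq n$, so $t'(n)$ equals the unrestricted maximum. For (v), existence: take $m:=t'(n)$ and $r:=n-t(m)$; maximality of $t'(n)$ gives $t(S(m))>n$, i.e.\ $t(m)+m+1>n$, so $r\leq m$ by Lemma \ref{leq}.(vi),(xi). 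Uniqueness: if $t(m)+r=t(m')+r'$ with $r\leq m$, $r'\leq m'$, and WLOG $m\leq m'$, then $m<m'$ would force $t(m')+r'\geq t(S(m))=t(m)+m+1>t(m)+r$, a contradiction; hence $m=m'$ and then $r=r'$ by Lemma \ref{induction}.(xvii).

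For (vi), define $\langle n,m\rangle:=t(n+m)+m$, $p_2(k):=k-t(t'(k))$, and $p_1(k):=t'(k)-p_2(k)$, all by composition. Surjectivity of $\langle\cdot,\cdot\rangle$ is exactly the existence clause of (v); the identities $p_1(\langle n,m\rangle)=n$, $p_2(\langle n,m\rangle)=m$, and $\langle p_1(k),p_2(k)\rangle=k$ all reduce to the uniqueness in (v) applied to the appropriate $k$, which also supplies injectivity. The main technical obstacle is item (v), whose uniqueness rests on combining strict monotonicity of $t$ (item (ii)), the maximality encoded in $t'$ (item (iv)), and cancellation from Lemma \ref{induction}.(xvii); item (iv) in turn requires bounding the search by $n$ via (iii) to fit the inner recursion within $PRA$.
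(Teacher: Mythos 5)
Your proposal is correct and follows essentially the same route as the paper: the recursion for $t$, monotonicity by open induction, the decomposition $n=t(m)+r$ with $r\leq m$ via the maximal $m$ with $t(m)\leq n$, the same uniqueness argument through $t(S(m))=t(m)+m+1$ and cancellation, and the same definitions $p_2(k)=k-t(t'(k))$, $p_1(k)=t'(k)-p_2(k)$. The only local divergence is in (iv), where the paper obtains $t'$ by invoking the general maximum machinery of Lemma \ref{max}.(ii)--(iii), while you inline a bespoke recursion $u(n,S(r))=S(r)$ if $t(S(r))\leq n$, else $u(n,r)$ --- which is exactly what Lemma \ref{max}.(iii) does internally, so both are fine (and your explicit remark that (ii)--(iii) bound the search by $n$ is a point the paper leaves implicit).
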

\begin{proof}

\

$(i)$ The function $t$ exists by $PRA$ and is extensionally unique by Equational Induction.

$(ii)$ Note, that since $0<n+1$, we have $t(n)+0<t(n)+n+1=t(S(n))$ by Lemma \ref{leq}.(xi).

We prove $m<n\rightarrow t(m)<t(n)$ by open induction on $n$. Suppose $m<n\rightarrow t(m)<t(n)$ and assume $m<S(n)$. By Lemma \ref{leq}.(ii), $m\leq n$. If $m<n$ then $t(m)<t(n)<t(S(n))$. If $m=n$ then $t(m)=t(n)<t(S(n))$.

$(iii)$ Let $n=m+2$. Since $t(m+1)>0$, we have $t(n)=t(m+2)=t(m+1)+m+2=t(m+1)+n>n$.

$(iv)$ Define by Lemma \ref{condition}

$
g(m,n,r)=\left\{
  \begin{array}
  [c]{ll}%
  0, \mbox{ if } t(r)\leq n
   \\
  1, \mbox{ otherwise}.
  \end{array}
\right.
$

Then apply Lemma \ref{max}.(ii) to get $t_{max}(n)=\max \{t(r):r\leq n\wedge t(r)\leq n\}$. Then by Lemma \ref{max}.(iii), set $t'(n)=\max \{r\leq n:t(r)=t_{max}(n)\}$.

$(v)$ Let $n$ be given and let by $(iv)$, $m$ be greatest such that $t(m)\leq n$. Set $r=n-t(m)$. Since $t(m+1)\leq n$ is false, then $n<t(m+1)=t(m)+m+1$. So $r=n-t(m)<t(m)+m+1-t(m)=m+1=S(m)$, hence by Lemma \ref{leq}.(ii), $r\leq m$.

For uniqueness, let $t(m)+r=t(m')+r'\wedge r\leq m\wedge r'\leq m'$. Assume $m<m'$. Then $S(m)\leq m'$, and by $(ii)$, $t(S(m))\leq t(m')$. Then by Lemma \ref{leq}.(x), $t(S(m))+r'\leq t(m')+r'=t(m)+r$, and so $t(m)+m+1+r'\leq t(m)+r$. Hence $m+1+r'\leq r\leq m$, contradiction. By symmetry, $m'<m$ is impossible. Therefore $m=m'$ and $r=r'$.

$(vi)$ Note, that $\langle,\rangle$ is defined by composition of functions $T$ and $+$. Let $n$ be given, then by $(v)$, $(\exists!m,r)(n=t(m)+r\wedge r\leq m)$. By Lemma \ref{leq}.(vi) $\exists k(r+k=m)$, then $\langle k,r\rangle=t(k+r)+r=t(m)+r=n$, so $n=\langle k,r\rangle$ and $\langle, \rangle$ is a surjective function.

To define $p_2$ note that $t(n+m)\leq t(n+m)+m$ and $t(n+m+1)=t(n+m)+n+m+1$, i.e. $\neg t(n+m+1)\leq t(n+m)+m$. Hence by $(iv)$, $t'(t(n+m)+m)=n+m$. Now define $p_2(t(n+m)+m)=(t(n+m)+m)-t(t'(t(n+m)+m))=(t(n+m)+m)-t(n+m)=m$, so we have $p_2(\langle n,m\rangle)=m$.

Define $p_1(t(n+m)+m)=t'(t(n+m)+m)-p_2(t(n+m)+m)=(n+m)-m=n$, so we have $p_1(\langle n,m\rangle)=n$.

Let $n$ be given, then $n=\langle m,r\rangle$ for some $m,r$, hence $p_1(n)=m$, $p_2(n)=r$. So $n=\langle p_1(n),p_2(n)\rangle$.

Suppose $\langle n,m\rangle=\langle n',m'\rangle$ then $n=p_1(\langle n,m\rangle)=p_1(\langle n',m'\rangle)=n'$ and $m=p_2(\langle n,m\rangle)=p_2(\langle n',m'\rangle)=m'$, so $\langle,\rangle$ is a bijection.
\end{proof}

\begin{definition}[Friedman \cite{Fr21}] A pairing system consists of functions $\langle,\rangle, p_1, p_2$, where $\langle,\rangle$ is binary and $p_1,p_2$ are unary, such that $p_1(\langle m,n\rangle)=m\wedge p_2(\langle m,n\rangle)=n\wedge\langle p_1(n),p_2(n)\rangle=n$.
\end{definition}

\

The following lemma will be crucial for the proof that permutation axiom $PERM$ implies $MIN^1$ from Definition \ref{statements}.

\

\begin{lemma}[Friedman \cite{Fr21}]\label{perm} The following are provable in $PRA_{fcn}$. Let the characteristic function of $A\subseteq2\omega+1$ exist. Let $h:\omega\rightarrow\omega$ map $A$ one-one onto $2\omega+1$.

(i) There is a surjective $f:\omega\rightarrow\omega-A$ such that $(\forall n)(f(n)\geq n)\wedge(\forall m,n)(m<n\rightarrow f(m)<f(n))$

(ii) There exists $f'$ such that for all $n\not\in A$, $f(f'(n))=n$ ($f'$ maps $\omega-A$ one-one onto $\omega$).

(iii) There exists a permutation $g$ of $\omega$ which agrees with $h$ on $A$.
\end{lemma}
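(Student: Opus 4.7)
The plan is to construct $f$ as the strictly increasing enumeration of $\omega - A$, use it to define an inverse $f'$ on $\omega - A$, and combine $f'$ with $h$ into a permutation of $\omega$. Since $A \subseteq 2\omega+1$, the complement $\omega - A$ contains every even number, so an increasing enumeration of $\omega - A$ can proceed in steps of size at most two. Concretely, I will apply $PRA$ together with Lemma \ref{condition} to define
\[
f(0) = 0, \qquad f(S(n)) = \begin{cases} S(f(n)) & \text{if } \chi_A(S(f(n))) = 0, \\ S(S(f(n))) & \text{otherwise,} \end{cases}
\]
where $\chi_A$ is the given characteristic function. This definition keeps $f(n)$ in $\omega - A$ at each stage: if the ``otherwise'' branch triggers, then $S(f(n)) \in A \subseteq 2\omega+1$ is odd, and alternation of $odd$ via Lemma \ref{lemmaarithm}.(v) and Theorem \ref{Theorem1}.(xiv) gives $odd(S(S(f(n)))) = 0$, hence $S(S(f(n))) \notin A$. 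Three applications of open induction (Lemma \ref{induction}.(xxviii)) then establish $f(n) \notin A$, $f(n) \geq n$, and $f(m) < f(n)$ whenever $m < n$ (since $f(S(k)) \geq S(f(k)) > f(k)$ using Lemma \ref{leq}).

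For part (ii), monotonicity $f(k) \geq k$ shows that any solution of $f(k) = n$ must satisfy $k \leq n$, so the definition
\[
f'(n) = \max\{\,k \leq n : f(k) = n\,\}
\]
from Lemma \ref{max} yields a total function (returning $0$ when no such $k$ exists). To simultaneously prove the surjectivity clause of (i) and the inverse relation of (ii), I will establish the open formula $\Phi(n) \wedge \Phi(S(n))$ by open induction on $n$, where $\Phi(k)$ abbreviates $\chi_A(k) \neq 0 \vee f(f'(k)) = k$. The two-step formulation is needed because the recursion for $f$ may jump over a single odd member of $A$: to deduce $\Phi(S(S(n)))$ under the hypothesis $S(S(n)) \notin A$, either $S(n) \notin A$ and one uses $\Phi(S(n))$ with one application of the recursion, or $S(n) \in A$ is odd, forcing $n$ even and hence $n \notin A$, at which point $\Phi(n)$ together with two applications of the recursion exhibits $S(f'(n))$ as the required witness. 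Strict monotonicity of $f$ identifies this witness uniquely with $f'(S(S(n)))$.

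For part (iii), apply Lemma \ref{condition} to define
\[
g(n) = \begin{cases} h(n) & \text{if } \chi_A(n) = 1, \\ 2 \cdot f'(n) & \text{otherwise.} \end{cases}
\]
To verify that $g$ is a permutation, examine the two halves separately. On $A$ the function $g$ coincides with $h$, which by hypothesis bijects $A$ onto $2\omega+1$. On $\omega - A$ the map $n \mapsto 2\cdot f'(n)$ is a bijection onto $2\omega$ because $f'$ restricted to $\omega - A$ is a bijection onto $\omega$ (its inverse is $f$ by (i) and (ii)) and doubling is a bijection from $\omega$ onto $2\omega$. The two images partition $\omega$ by parity, so $g$ is a bijection of $\omega$ agreeing with $h$ on $A$.

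The main obstacle is surjectivity in part (i): the statement ``every $n \notin A$ lies in the range of $f$'' is existential and hence not directly an open formula amenable to the induction available in $PRA_{fcn}$. Introducing $f'$ first, via the bounded maximum of Lemma \ref{max}, turns the existential into the equation $f(f'(n)) = n$, and the two-step open induction then accommodates the single-odd-skip structure built into the recursion defining $f$.
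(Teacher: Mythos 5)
Your proposal is correct and its skeleton matches the paper's proof almost exactly: the same skip-by-one-or-two recursion for $f$ (the paper packages the step as $f(n+1)=g'(f(n))$, where $g'(n)$ is the least element of $\omega-A$ above $n$, with the same parity argument that the target of a jump of two is even and hence outside $A$), the same bounded-maximum definition of $f'$, and the identical definition of $g$ in part (iii). Where you genuinely diverge is on the one step carrying the combinatorial weight, surjectivity of $f$ onto $\omega-A$. The paper fixes $n\notin A$, forms $t=f_{max}(n)=\max\{f(r):r\leq n\wedge f(r)\leq n\}$ by Lemma \ref{max}.(ii), and refutes $t<n$ by showing $g'(t)\leq n$, so that $f(s+1)$ would exceed the maximum; surjectivity is thus a maximality argument, and $f'$ is introduced only afterwards in part (ii). You instead define $f'$ first and prove the open formula $\Phi(n)\wedge\Phi(S(n))$, with $\Phi(k)$ asserting $k\in A$ or $f(f'(k))=k$, by a two-step open induction whose case split is resolved by the parity of $A$. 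Both devices convert the existential statement ``$n$ lies in the range of $f$'' into something reachable by the open induction of Lemma \ref{induction}.(xxviii), and both are available in $PRA_{fcn}$; yours delivers (i) and (ii) in a single induction, while the paper's keeps them independent at the cost of a separate contradiction argument. Two cosmetic remarks: in the second case of your inductive step the witness $S(f'(n))$ is reached by one application of the recursion (whose increment happens to be two), not two applications; and $\max\{k\leq n:f(k)=n\}$ is not literally an instance of Lemma \ref{max}.(iii), which maximizes over $\{r\leq n:f(r)=f_{max}(m,n)\}$, though instantiating that lemma with the condition $f(r)\leq n$ — as the paper does — yields a function agreeing with yours wherever a preimage exists, which is all your induction uses.
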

\begin{proof}

\

$(i)$ Since the characteristic function $\chi$ of $A\subseteq2\omega+1$ exists, i.e. the condition $n\in A$ could be expressed as $\chi_A(n)=1$, we define by Lemma \ref{condition} the following function:

$
g'(n)=\left\{
  \begin{array}
  [c]{ll}%
  n+1, \mbox{ if } odd(n)=1, \mbox{ or } n+1\not\in A
   \\
  n+2, \mbox{ otherwise}.
  \end{array}
\right.
$

Then $g'(n)$ is the least number outside of $A$ and greater than $n$.

Now define $f(0)=0$, $f(n+1)=g'(f(n))$. Clearly $(\forall n)(f(n)\not\in A)$ and it is easy to prove $n<m\rightarrow f(n)<f(m)$ by open induction.

Since $f$ is strictly increasing, it is also easy to prove $f(n)\geq n$ by open induction.

Let $n\not\in A$. By Lemma \ref{max}.(ii) define $f_{max}(n)=\max \{f(r):r\leq n\wedge f(r)\leq n\}$ and denote $t=f_{max}(n)$. Let $s$ be such that $t=f(s)$ and suppose that $t<n$.

We claim that $g'(t)\leq n$. If $g'(t)=t+1$ then $g'(t)\leq n$ by Lemma \ref{leq}.(vii). Suppose $g'(t)=t+2$ and $n<g'(t)=S(t+1)$. Then by Lemma \ref{leq}.(ii), $n\leq t+1$. By Definition \ref{defleq}, $n<t+1\vee n=t+1$. If $n<t+1$ then by Lemma \ref{leq}.(ii), $n\leq t$, contradicts to $t<n$. If $n=t+1$, then since $n\not\in A$, by definition, $g'(t)=t+1=n$, contradiction. So, we have $g'(t)\leq n$.

By definition, $f(s+1)=g'(f(s))=g'(t)\leq n$. This contradicts to the choice of $t$. Hence $t=n$, i.e. $f(s)=t=n$ and $f$ is surjection.

$(ii)$ Since $n<m\rightarrow f(n)<f(m)$, there is the only $s$ such that $f(s)=n$. By Lemma \ref{max}.(iii) define $f'(n)=s$. Clearly $n\not\in A'\rightarrow f(f'(n))=n$.

$(iii)$ Define

$
g(n)=\left\{
  \begin{array}
  [c]{ll}%
  2\cdot f'(n), \mbox{ if } n\not\in A
   \\
  h(n), \mbox{ otherwise}.
  \end{array}
\right.
$

Then $g$ maps $\omega-A$ one-one onto $2\omega$, and $A$ one-one onto $2\omega+1$. Hence $g$ is a permutation which agrees with $h$ on $A$.
\end{proof}

\

The next lemma shows the existence of the quotient function that gives the result of the division of a number by 2. To prove this we use the $odd$ function and its properties from Theorem \ref{Theorem1}.

\begin{lemma}\label{quotient} The following are provable in $PRA_{fcn}$.

(i) $(\exists q)(q(0)=0\wedge q(S(n))=q(n)+\overline{sg}(odd(S(n))))$

(ii) $odd(S(2\cdot n))=1$

(iii) $q(2\cdot n)=n$
\end{lemma}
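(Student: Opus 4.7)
My plan has three natural pieces corresponding to the three clauses.

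For clause (i), the plan is to apply $WPRA$ directly. By Lemma \ref{lemma1}, from the functions $+$, $\overline{sg}$, $odd$ of Lemma \ref{lemmaarithm} I can construct a ternary $h'$ with $h'(m,n,r)=r+\overline{sg}(odd(n))$, and by initial functions axiom (i) I have the constant unary $g$ with $g(m)=0$. Applying $WPRA$ to $g$ and $h'$ yields a binary $q'$ satisfying $q'(m,0)=0$ and $q'(m,S(n))=h'(m,S(n),q'(m,n))=q'(m,n)+\overline{sg}(odd(S(n)))$. Using Lemma \ref{lemma1}.(iii) I then set $q(n)=q'(0,n)$ to obtain the desired unary function.

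For clause (ii), I would rewrite $S(2\cdot n)$ into a form to which Theorem \ref{Theorem1}.(xxi) applies. Using Theorem \ref{Theorem1}.(i) and (iv) together with Lemma \ref{lemmaarithm}.(i) one has $S(2\cdot n)=2\cdot n+1=1+2\cdot n$. Then Theorem \ref{Theorem1}.(xxi) with $m:=1$ gives $odd(1+2\cdot n)=odd(1)$, and Theorem \ref{Theorem1}.(xvi) finishes the computation with $odd(1)=1$.

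For clause (iii), I would use open induction (Lemma \ref{induction}.(xxviii)) on $n$, or equivalently Equational Induction. The base case is immediate: $2\cdot 0=0$ by Lemma \ref{lemmaarithm}.(ii), and $q(0)=0$ by definition. For the inductive step, assume $q(2\cdot n)=n$; I need $q(2\cdot S(n))=S(n)$. The key observation is that $2\cdot S(n)=2\cdot n+2=S(S(2\cdot n))$ by Lemma \ref{lemmaarithm}.(ii) and Theorem \ref{Theorem1}.(i), so unfolding the defining recursion of $q$ twice I get
\[
q(2\cdot S(n))=q(2\cdot n)+\overline{sg}(odd(S(2\cdot n)))+\overline{sg}(odd(S(S(2\cdot n)))).
\]
By clause (ii) the first summand after $q(2\cdot n)$ is $\overline{sg}(1)=0$, and by Theorem \ref{Theorem1}.(xx) followed by Theorem \ref{Theorem1}.(xix) the last summand is $\overline{sg}(odd(2\cdot n))=\overline{sg}(0)=1$. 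Combining with the induction hypothesis and Theorem \ref{Theorem1}.(i) yields $q(2\cdot S(n))=n+1=S(n)$.

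The main obstacle is really only bookkeeping in (iii): making sure the two nested increments of $q$ are resolved in the right order and that Theorem \ref{Theorem1}.(xix), (xx), (xxi) and clause (ii) are invoked at the correct spots. Existence in (i) is a direct instance of $WPRA$ plus Lemma \ref{lemma1}, and (ii) is a one-line consequence of the already established parity identities.
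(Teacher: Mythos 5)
Your proposal is correct and follows essentially the same route as the paper: define $q$ by (weak) primitive recursion, prove (ii) from the parity identities of Theorem \ref{Theorem1}, and prove (iii) by equational induction after rewriting $2\cdot S(n)$ as $S(S(2\cdot n))$ and unfolding the defining recursion twice, with the two increments evaluated via (ii) and Theorem \ref{Theorem1}.(xix)--(xx). The only cosmetic differences are that the paper cites $PRA$ directly for (i) where you invoke $WPRA$ (equivalent over $COMI_{fcn}$, hence available in $PRA_{fcn}$), and it obtains (ii) from $odd(S(m))=\overline{sg}(odd(m))$ plus Theorem \ref{Theorem1}.(xix) rather than via Theorem \ref{Theorem1}.(xxi).
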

\begin{proof}

\

$(i)$ This function exists by $PRA$ and Lemma \ref{lemmaarithm}.

$(ii)$ By Theorem \ref{Theorem1}.(xix), $odd(S(2\cdot n))=\overline{sg}(odd(2\cdot n))=\overline{sg}(0)=1$.

$(iii)$ Use Equational Induction on $n$. If $n=0$ then $q(2\cdot 0)=q(0)=0$. Suppose $q(2\cdot n)=n$, then by $(ii)$, Theorem \ref{Theorem1}.(xii) and Theorem \ref{Theorem1}.(xx), $q(2\cdot S(n))=q(S(n)+S(n))=q(n+1+n+1)=q(n+n+S(0)+S(0))=q(S(2\cdot n+S(0)+0))=q(S(2\cdot n+S(0)))=q(S(S(2\cdot n+0)))=q(S(S(2\cdot n)))=q(S(2\cdot n))+\overline{sg}(odd(S(S(2\cdot n))))=q(2\cdot n)+\overline{sg}(odd(S(2\cdot n)))+\overline{sg}(odd(2\cdot n))=n+\overline{sg}(1)+\overline{sg}(0)=n+0+1=S(n)$.
\end{proof}

\

Now we prove the main theorem.

\

\begin{theorem}\label{Theorem2}

The following assertions are pairwise equivalent over $PRA_{fcn}$.

1. $MIN^3:(\forall m,n)(\exists r)(f(m,n,r)=0)\rightarrow(\exists g)(\forall m,n)(g(m,n)=(\mu r)(f(m,n,r)=0))$.

2. $MIN^2:(\forall m)(\exists n)(f(m,n)=0)\rightarrow(\exists g)(\forall m)(g(m)=(\mu n)(f(m,n)=0))$.

3. $MIN^1:(\forall m)(\exists! n)(f(m,n)=0)\rightarrow(\exists g)(\forall m)(f(m,g(m))=0)$.

4. $PERM:(\forall n)(\exists!m)(f(m)=n)\rightarrow(\exists g)(\forall n)(f(g(n))=n)$
\end{theorem}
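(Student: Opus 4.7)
Since Friedman has already established the chain $PERM\to MIN^1\to MIN^2\to MIN^3$ over $PRA_{fcn}$, the plan is to close the cycle by proving the reverse chain $MIN^3\to MIN^2\to MIN^1\to PERM$; each implication is a short reduction using the composition axioms and the arithmetic machinery developed in Section 2.

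For $MIN^3\to MIN^2$, given a binary $f$ with $(\forall m)(\exists n)(f(m,n)=0)$, Lemma \ref{lemma1} provides a ternary $F$ with $F(m,n,r)=f(m,r)$, so that $(\forall m,n)(\exists r)(F(m,n,r)=0)$ holds trivially; $MIN^3$ then yields $g(m,n)=\mu r\,(f(m,r)=0)$, and setting $g'(m)=g(m,0)$ by composition supplies the witness for $MIN^2$. The implication $MIN^2\to MIN^1$ is immediate: the uniqueness hypothesis of $MIN^1$ entails the bare existence hypothesis of $MIN^2$, and the minimizing function furnished by $MIN^2$ automatically satisfies $f(m,g(m))=0$.

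The most substantive step is $MIN^1\to PERM$. Let $f$ be unary with $(\forall n)(\exists!m)(f(m)=n)$. The key is to encode the relation ``$f(m)=n$'' as a single equation of the form required by $MIN^1$. By Lemma \ref{induction}.(xxiv), the formula $f(m)=n$ is provably equivalent to $(f(m)-n)+(n-f(m))=0$, so using composition together with the subtraction and addition functions from Lemma \ref{lemmaarithm} we obtain a binary function $F(n,m)=(f(m)-n)+(n-f(m))$ satisfying $F(n,m)=0\leftrightarrow f(m)=n$. The hypothesis on $f$ then becomes $(\forall n)(\exists!m)(F(n,m)=0)$; applying $MIN^1$ (with the roles of $m$ and $n$ interchanged) produces $g$ with $F(n,g(n))=0$, that is, $f(g(n))=n$, which is exactly the conclusion of $PERM$.

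The only real subtlety lies in this last implication, where one must translate the atomic relation $f(m)=n$ into a single equation $t=0$ so that it fits the template of $MIN^1$; this is precisely what the arithmetic of $PRA_{fcn}$ developed in Section 2 lets us do in one step. The other two implications are essentially arity-manipulation arguments powered by Lemma \ref{lemma1}, and together with Friedman's chain they establish the pairwise equivalence of $PERM$, $MIN^1$, $MIN^2$, and $MIN^3$ over $PRA_{fcn}$.
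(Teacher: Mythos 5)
Your proposal is correct and follows essentially the same route as the paper: it closes the cycle by proving $MIN^3\rightarrow MIN^2\rightarrow MIN^1\rightarrow PERM$ via arity manipulation (Lemma \ref{lemma1}) and by encoding $f(m)=n$ as an equation $t=0$, then combines this with Friedman's chain $PERM\rightarrow MIN^1\rightarrow MIN^2\rightarrow MIN^3$. The only cosmetic difference is that the paper packages the encoding step through the conditional-definition Lemma \ref{condition}, whereas you inline the underlying arithmetic $(f(m)-n)+(n-f(m))=0$ directly.
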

\begin{proof}

\

$1\rightarrow2$. Suppose that $(\forall m)(\exists n)(f(m,n)=0)$, then by Lemma \ref{lemma1} define $f'(r,m,n)=f(m,n)$. We have $(\forall r,m)(\exists n)(f'(r,m,n)=0)$. By $MIN^3$ there exists $g$ such that $(\forall r,m)(g(r,m)=(\mu n)(f'(r,m,n)=0))$. By Lemma \ref{lemma1} set $g'(m)=g(r,m)$ for some fixed $r$, so we have $g'$ such that $(\forall m)(g'(m)=(\mu n)(f(m,n)=0))$.

$2\rightarrow3$. Obvious.

$3\rightarrow4$. Suppose that $(\forall n)(\exists!m)(f(m)=n)$. By Lemma \ref{condition} define $h$ as follows:

$$
h(n,m)=\left\{
  \begin{array}
  [c]{ll}%
  0, \mbox{ if } f(m)=n
   \\
  1, \mbox{ otherwise}.
  \end{array}
\right.
$$

We have $(\forall n)(\exists!m)(h(n,m)=0)$, hence by $MIN^1$ there is $g$ such that $(\forall m)(h(m,g(m))=0)$, i.e. $(\exists g)(\forall m)(f(g(m))=m)$.

$4\rightarrow3$ (Friedman \cite{Fr21}). Let $(\forall m)(\exists! n)(f(m,n)=0)$. Let $A=\{2\cdot \langle m,n\rangle+1|f(m,n)=0\}$. Then $A$ has a characteristic function that is defined by conditional terms from Lemma \ref{condition} using the pairing system $\langle,\rangle, p_1,p_2$ from Lemma \ref{pair}. Define $h$ as follows:

$$
h(r)=\left\{
  \begin{array}
  [c]{ll}%
  2\cdot m+1, \mbox{ if } r=2\cdot \langle m,n\rangle+1
   \\
  0, \mbox{ otherwise}.
  \end{array}
\right.
$$

Then $h$ maps $A$ one-one onto $2\omega+1$. By Lemma \ref{perm}.(iii), let $g$ be a permutation of $\omega$ which agrees with $h$ on $A$. Applying PERM, $g^{-1}$ exists and for all $m$, $g^{-1}(2\cdot m+1)=2\cdot \langle m,n\rangle+1$ for the unique $n$ with $f(m,n)=0$. Let $q$ be the quotient function from Lemma \ref{quotient}, define $g'(m)=p_2(q(g^{-1}(2\cdot m+1)-1))=p_2(q(2\cdot \langle m,n\rangle+1-1))=p_2(q(2\cdot \langle m,n\rangle))=p_2(\langle m,n\rangle)=n$. Then $(\forall m)(f(m,g'(m))=0)$.

$3\rightarrow2$ (Friedman \cite{Fr21}). Let $(\forall m)(\exists n)(f(m,n)=0)$. Define $h$ as follows by Lemma \ref{condition}:

$$
h(m,n)=\left\{
  \begin{array}
  [c]{ll}%
  0, \mbox{ if } f(m,n)=0\wedge(\forall r<n)(f(m,n)\not=0)
   \\
  1, \mbox{ otherwise}.
  \end{array}
\right.
$$

Clearly, $(\forall m)(\exists!n)(h(m,n)=0)$. By $MIN^1$ there exists $g$ such that $(\forall m)(h(m,g(m))=0)$, i.e. $f(m,g(m))=0$. Also $(\forall m)(\forall r<g(m))(f(m,g(m))\not=0))$, so $(\forall m)(g(m)=(\mu n)(f(m,n)=0))$.

$2\rightarrow1$ (Friedman \cite{Fr21}). Let $(\forall m,n)(\exists r)(f(m,n,r)=0)$. Since $\langle,\rangle$ is a bijection, define $f'(\langle m,n\rangle,r)=f(p_1(\langle m,n\rangle), p_2(\langle m,n\rangle),r)$. We have $(\forall m,n)(\exists r)(f'(\langle m,n\rangle),r)=0)$. By $MIN^2$ there exists $g'$ such that $(\forall m)(g'(\langle m,n\rangle)=(\mu r)(f'(\langle m,n\rangle),r)=0)$. Define $g(m,n)=g'(\langle m,n\rangle)$.
\end{proof}

Thus, we have presented several equivalent axiomatizations of $ETF$.

\begin{corollary}
$ETF$ is equivalent to $PRA_{fcn}+MIN^k$ and to $COMI_{fcn}+WPRA+MIN^k$ for $k=1,2,3$.
\end{corollary}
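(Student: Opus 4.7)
The plan is to close a cycle of implications among the four statements, following the chain $MIN^3 \Rightarrow MIN^2 \Rightarrow MIN^1 \Rightarrow PERM \Rightarrow MIN^1 \Rightarrow MIN^2 \Rightarrow MIN^3$ (equivalently, the two short chains $MIN^3 \Rightarrow MIN^2 \Rightarrow MIN^1$ and $MIN^1 \Rightarrow MIN^2 \Rightarrow MIN^3$, together with the equivalence $MIN^1 \Leftrightarrow PERM$). The downward chain $MIN^3 \Rightarrow MIN^2 \Rightarrow MIN^1$ is routine: for the first step, promote a binary $f(m,n)$ to a ternary $f'(r,m,n)=f(m,n)$ via Lemma \ref{lemma1}, apply $MIN^3$, and then specialize the dummy argument; the second step is immediate since unique existence strengthens existence.

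For the upward chain, $MIN^1 \Rightarrow MIN^2$ is handled by the standard ``least witness'' trick: given $f$ with $(\forall m)(\exists n)(f(m,n)=0)$, use Lemma \ref{condition} together with the order $<$ from Definition \ref{defleq} to define $h(m,n)$ equal to $0$ iff $f(m,n)=0$ and $(\forall r<n)(f(m,r)\neq 0)$. This $h$ has a unique zero per row (namely $(\mu n)(f(m,n)=0)$), so $MIN^1$ gives the minimizing function. Then $MIN^2 \Rightarrow MIN^3$ reduces ternary to binary through the pairing bijection of Lemma \ref{pair}: set $f'(\langle m,n\rangle,r)=f(p_1(\langle m,n\rangle),p_2(\langle m,n\rangle),r)$ and apply $MIN^2$. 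The direction $MIN^1 \Rightarrow PERM$ is similarly direct: given a bijection $f$, define $h(n,m)=0$ iff $f(m)=n$ via Lemma \ref{condition}; the one-to-one onto hypothesis of $PERM$ is precisely the unique-existence hypothesis of $MIN^1$, and the resulting $g$ is the inverse.

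The main obstacle is $PERM \Rightarrow MIN^1$, which requires encoding choice through permutations. The plan is to take the coded set $A=\{2\cdot\langle m,n\rangle+1 : f(m,n)=0\}$, whose characteristic function exists by Lemma \ref{condition} combined with the pairing system of Lemma \ref{pair} and the $odd$ function of Theorem \ref{Theorem1}. Define $h$ to send each $2\cdot\langle m,n\rangle+1 \in A$ to $2\cdot m+1$; by the unique-existence hypothesis, $h$ maps $A$ one-one onto $2\omega+1$. Apply Lemma \ref{perm}(iii) to extend $h$ to a permutation $g$ of $\omega$ agreeing with $h$ on $A$, then apply $PERM$ to obtain $g^{-1}$. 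For each $m$, $g^{-1}(2\cdot m+1)$ is the unique $2\cdot\langle m,n\rangle+1$ with $f(m,n)=0$, and the witness $n$ is extracted by computing $p_2(q(g^{-1}(2\cdot m+1)-1))$, where $q$ is the halving function from Lemma \ref{quotient} and $p_2$ is the second projection. The correctness check reduces to $q(2\cdot\langle m,n\rangle)=\langle m,n\rangle$, which is Lemma \ref{quotient}(iii), followed by $p_2(\langle m,n\rangle)=n$ from the pairing system.

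The delicate point throughout is that every construction must stay strictly within the function-algebra vocabulary: conditional definitions only via Lemma \ref{condition}, minimization only via the constructed characteristic and quotient functions, and no appeal to unbounded logical comprehension. I would present the two easy chains first so that the reader has $MIN^1 \Leftrightarrow MIN^2 \Leftrightarrow MIN^3$ established, and then spend the bulk of the proof on $PERM \Rightarrow MIN^1$, since that is where the coding machinery built up in Lemmas \ref{condition}--\ref{quotient} is genuinely used.
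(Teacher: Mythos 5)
Your proposal is correct and follows essentially the same route as the paper: the cycle of implications you describe is exactly the proof of Theorem \ref{Theorem2}, including the same coded set $A=\{2\cdot\langle m,n\rangle+1 : f(m,n)=0\}$, the same appeal to Lemma \ref{perm}(iii), and the same extraction $p_2(q(g^{-1}(2\cdot m+1)-1))$ in the $PERM\rightarrow MIN^1$ direction. The only piece you leave unaddressed is the second clause of the corollary, the equivalence of $ETF$ with $COMI_{fcn}+WPRA+MIN^k$, which additionally requires the earlier corollary that $WPRA$ and $PRA$ are equivalent over $COMI_{fcn}$; once that is cited, the argument is complete.
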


\end{document}